\documentclass[letterpaper]{scrartcl}

\usepackage{mathtools, amsfonts, amssymb, amsthm, graphicx, hyperref,float,subcaption,fancyhdr,titling,tensor,mathrsfs,tikz,tkz-euclide,units,authblk}
\usepackage[inline]{enumitem}
\usetikzlibrary{decorations.pathmorphing}

\usepackage[margin=.7in]{geometry}

\newtheorem{theorem}{Theorem}[section]
\newtheorem{lemma}[theorem]{Lemma}

\newtheorem{proposition}[theorem]{Proposition}

\theoremstyle{definition}
\newtheorem{definition}[theorem]{Definition}

\theoremstyle{remark}
\newtheorem*{remark}{Remark}

\makeatletter
\newcommand{\problem}{\@ifstar{\problemStar}{\problemNoStar}}
\newcommand{\problemStar}[1][]{\vspace{2\baselineskip}\refstepcounter{problem}{\noindent\large \bfseries Problem~#1}\\}
\newcommand{\problemNoStar}{\vspace{2\baselineskip}\refstepcounter{problem}{\noindent\large \bfseries Problem~\arabic{set}-\arabic{problem}}\\}
\makeatother

\pagestyle{fancy}
\fancyhead[R]{}
\fancyhead[L]{}
\fancyfoot{}

\fancyfoot[R]{\thepage}

\makeatletter
\renewcommand*\env@matrix[1][*\c@MaxMatrixCols c]{%
	\hskip -\arraycolsep
	\let\@ifnextchar\new@ifnextchar
	\array{#1}}
\makeatother

\title{Scaling Limits of Fluctuations of Extended-Source Internal DLA} 
\author{David Darrow\thanks{Supported in part by NSF Grant DMS 1500771.}} 
\date{\today}



\newcommand{\ql}{\textquotedblleft}
\newcommand{\qr}{\textquotedblright{}}
\newcommand{\pdrv}[2]{\frac{\partial #1}{\partial #2}}

\newcommand{\eps}{\varepsilon}
\newcommand{\Eps}{\mathcal{E}}
\newcommand{\ol}[1]{{\overline{#1}}}

\newcommand{\upto}{\nearrow}

\newcommand{\op}{\operatorname}
\newcommand{\supp}{\operatorname{supp}}

\begin{document}
\maketitle
\thispagestyle{fancy}
\begin{abstract}
	\noindent\textbf{Abstract.} In a previous work, we showed that the 2D, extended-source internal DLA (IDLA) of Levine and Peres is $\delta^{3/5}$-close to its scaling limit, if $\delta$ is the lattice size. In this paper, we investigate the scaling limits of the fluctuations themselves. Namely, we show that two naturally defined error functions, which measure the \ql lateness\qr{} of lattice points at one time and at all times, respectively, converge to geometry-dependent Gaussian random fields. We use these results to calculate point-correlation functions associated with the fluctuations of the flow. Along the way, we demonstrate similar $\delta^{3/5}$ bounds on the fluctuations of the related \emph{divisible sandpile} model of Levine and Peres.
\end{abstract}
\tableofcontents

\section{Introduction}
Internal diffusion-limited aggregation (IDLA) is a lattice growth model, tracking the growth of a random set $A(t)\subset\mathbb{Z}^d$ defined as follows. At each time $t$, we start a particle at the origin, and we let it undergo a simple random until it first exits the set $A(t-1)$---supposing it exits at the point $z_t$, we set $A(t)=A(t-1)\cup\{z_t\}$. Intuitively, this process follows the diffusion of particles from an origin-centered source. In fact, it was originally proposed by the chemical physicists Meakin and Deutch \cite{doi:10.1063/1.451129} in order to model such diffusive processes, such as the smoothing of a spherical surface by electrochemical polishing.

We are interested in a generalization of this model to the extended-source case, wherein particles start instead from discretizations of a fixed mass distribution, and the lattice size is allowed to grow arbitrarily small. This generalization was first introduced and studied by Levine and Peres \cite{Levine_2008}, although it corresponds to Diaconis and Fulton's earlier notion of a \ql smash sum\qr{} of two sets \cite{stanford1991growth}.

In both cases, a primary question of study is the overall smoothness of the occupied set $A(t)$. Following the work of Lawler, Bramson, and Griffeath \cite{lawler1992}, it is well-known that---in the point-source case---these sets closely approximate an origin-centered ball for large $t$. Several authors have shown strong convergence rates for this process \cite{lawler1995,asselah:hal-00795850}; most recently, Jerison, Levine, and Sheffield proved that the fluctuations away from the disk are at most of order $\log t$ in dimension 2, narrowly improving a $\log^2t$ result by Asselah and Gaudillière \cite{10.2307/23072157, asselah2013}. Independent works by Asselah and Gaudillière and by Jerison, Levine, and Sheffield proved bounds of order $\sqrt{\log t}$ in higher dimensions \cite{Asselah_2013,jerison2013}, which have been shown to be tight \cite{asselah2011lower}. In the extended-source case, Levine and Peres first showed that the scaling limits of IDLA correspond to solutions of a closely related free boundary problem \cite{Levine_2008}. We recently proved that, if the lattice size is $\delta$, the fluctuations of IDLA away from this expected set are at most of order $\delta^{3/5}$.

The fluctuations can also be studied \ql on the aggregate\qr{}, however, which provides interesting insight into the geometry of the problem. Namely, we are interested in studying mean fluctuations over an area of finite volume, as weighted by a test function $u\in C^\infty(\mathbb{R}^d)$. To do this in the point-source case, Jerison et al \cite{jerison2014} introduced natural error functions on the lattice $\delta\mathbb{Z}^d$, which quantify how late or early the IDLA process is in getting to a given point. Specifically, they introduced a \emph{fluctuation function} $E^s$ and a \emph{lateness function} $L$, that capture fluctuations at a single time $s$ and at all times, respectively. They proved that these error functions weakly approach certain Gaussian random fields as the lattice spacing $\delta$ decreases, allowing them to find the scaling limits of fluctuations integrated against a test function $u$. Eli Sadovnik studied this question more recently for an extended source, in the special case of the single-time fluctuation function and with discrete harmonic test functions \cite{eli16}.

In this paper, we extend the techniques used in \cite{jerison2014} and \cite{eli16} in order to prove more general scaling limits of random error functions in the extended-source case. Our main results, Theorems \ref{fixedtime} and \ref{orderedtime}, show that the fluctuation function $E^s$ and the lateness function $L$ converge weakly to geometry-dependent Gaussian random fields, allowing for any $C^4$ test functions. In particular, by choosing highly localized test functions, we will be able to calculate \ql point-correlation functions\qr{}, which encode the correlations between fluctuations of IDLA at two different points in space.

It must be noted that, in the point-source case of \cite{jerison2014}, the functions $E^s$ and $L$ measure fluctuations away from a previously-calculated continuous limit of IDLA; specifically, they measure the difference between $A(t)$ and a smooth sphere. To our knowledge, this is not possible in the general-source case without stronger estimates on the convergence of discrete harmonic functions---as such, our general-source versions of $E^s$ and $L$ compare IDLA to a closely related deterministic process: the \emph{discrete sandpile} model of Levine and Peres \cite{Levine_2010}. We show in Theorem \ref{narrowsand} that the discrete sandpile converges at least as quickly as the best known estimates (from \cite{darrow2020convergence}) on IDLA. In fact, we believe that it converges faster than IDLA, but the estimate from Theorem \ref{narrowsand} is sufficient for our purposes.

After briefly reviewing the necessary theory, we introduce our primary results in Section \ref{mainresults}. The following sections are spent proving these results; Section \ref{fixedtimeproof} proves the scaling limit of the fixed-time fluctuation function, and Section \ref{orderedtimeproof} proves that of the lateness function. Finally, we use these results to calculate point correlation functions of IDLA fluctuations in Section \ref{correlations}.

\section{Review of lattice growth processes}
Here we provide a background on extended-source IDLA and on a related deterministic process, the \emph{divisible sandpile growth} introduced also by Levine and Peres \cite{Levine_2008}. Many of our specific definitions are taken from our preceding paper, \cite{darrow2020convergence}; see that paper for more information.

Following from \cite{darrow2020convergence}, we restrict attention to IDLA processes started from \emph{concentrated mass distributions}. 

\begin{definition}
	Let $D_0\subset\mathbb{R}^2$ be a compact, connected domain with smooth boundary, and fix $N\in\mathbb{Z}^{\geq 0}$ and $T_1,...,T_N\in\mathbb{R}^{\geq 0}$. For each $i=1,...,N$ and $s\in[0,T_i]$, suppose $Q^s_i\subset D_0$ satisfies the following properties:
	\begin{enumerate}
		\item $Q_i^s$ is a compact domain with $\op{Vol}(Q_i^s)=s$.
		\item $Q_i^s$ is bounded away from $\partial D_0$---that is, $Q_i^s\subset\subset\op{int}(D_0)$.
		\item $Q_i^{s}\subset Q_i^{s'}$ for $s\leq s'\leq T_i$.
		\item $\partial Q_i^s$ is rectifiable, with arclength bounded independently of $s$.
	\end{enumerate}
	Finally, set $T=\sum_kT_k$, and fix increasing functions $s_i:[0,T]\to[0,T_i]$ satisfying $\sum_ks_k(s)=s$ for all $s\in[0,T]$.
	In this setting, the \emph{concentrated mass distribution} associated to the data $(D_0,\{Q^s_i\},\{s_i\})$ is the map $\sigma_s:\mathbb{R}^2\to\mathbb{Z}^{\geq 0}$ defined by
	\[\sigma_s=\mathbf{1}_{D_0}+\sum_{i=1}^{N}\mathbf{1}_{Q^{s_i(s)}_i}.\]
\end{definition}
In short, a concentrated mass distribution is a collection of increasing subsets $Q^{s_i}_i$ of $D_0$, such that the total mass at any time $s$ is $\op{vol}(D_0)+s$. The functions $s_i$ give the mass of each subset $Q^{s_i}_i$ at the time $s$.

The analysis of this paper holds in its entirety for infinite mass distributions, where $T=\infty$. For these, we simply require that the finite-time collections $\{Q_i^s\}|_{s\leq T'}$ and $\{\sigma_s\}|_{s\leq T'}$ give rise to concentrated mass distributions for any $T'>0$. We will assume that $T<\infty$, but we can also imagine that we have simply ``cut off'' an infinite mass distribution in the manner just described.

Since we are studying processes on discrete lattices, we are primarily interested in the restrictions of these mass functions to the grid $\frac{1}{m}\mathbb{Z}^2$. Write $S^m_s$ for the multiset defined by $\sum\nolimits_{\frac{1}{m}\mathbb{Z}^2}(\sigma_s-\mathbf{1}_{D_0})$; that is, for any $z\in\frac{1}{m}\mathbb{Z}^2\cap\bigcup Q_i^{s_i}$, we have that $z\in S^m_s$ with multiplicity $\sigma_s(z)-1$.
We can order $S^m_T$ into a sequence $z_{m,j}$ of source points as follows:
\begin{enumerate}
	\item If $z\in S^m_T$ with multiplicity $k$, let $s_i(z):=\inf\{t\;|\;z\in S^m_t\;\text{with multiplicity}\;i+1\}$ for $i\leq k-1$.
	\item Given $\{z_{m,1},...,z_{m,j-1}\}$, choose $z_{m,j}\in S^m_T\setminus \{z_{m,1},...,z_{m,j-1}\}$ to minimize $s_{i(z,j)}(z)$, where $i(z,j)$ is the multiplicity of $z$ in $\{z_{m,1},...,z_{m,j-1}\}$.
\end{enumerate}
In short, we are simply ordering the particles in $S^m_T$ in the order they appear (accounting for multiplicity) in the sets $\{S^m_s\}$. Given this sequence, we define the discrete densities $\sigma_{m,n}=\mathbf{1}_{\frac{1}{m}\mathbb{Z}^2\cap D_0}+\sum_{i=1}^n\mathbf{1}_{\{z_{m,i}\}}$. It is clear that $\sigma_{m,n}$ differs from its continuum limit $\sigma_{n/m^2}$ at only $O(m^{-2})$ points, accounting for multiplicity.

The (resolution $m$) \emph{internal DLA (IDLA)} associated with the mass distribution is the following process:
\begin{definition}[Internal DLA]
	 Suppose we have a concentrated mass distribution with initial set $D_0$ giving rise to the sequences $\{z_{m,t}\}$. The IDLA $A_m(t)$ associated with the mass distribution is as follows. Define the initial set $A_m(0)=\frac{1}{m}\mathbb{Z}^2\cap D_0$. Then, for each integer $t\geq 1$, start a simple random walk at $z_{m,t}$, and let $z'_t$ be the first point in the walk outside the set $A_m(t-1)$---then $A_m(t):=A_m(t-1)\cup\{z'_t\}$.
	 
	 Importantly, the law of $A_m(t)$ does not depend on the order of $\{z_{m,1},...,z_{m,t}\}$, as proven by Diaconis and Fulton \cite[Lemma 2.2]{stanford1991growth} (see \cite[Section 3]{stanford1991growth} for the application of this result to our setting).
\end{definition}
For each time $s$, the sets $A_m(m^2s)$ approach a deterministic limit $D_s$ almost surely, where $D_s$ is the Diaconis--Fulton ``smash'' sum
\[D_s=D_0\oplus Q_1^{s_1}\oplus\cdots\oplus Q_N^{s_N}.\]
The smash sum operation is as defined in \cite{Levine_2010}:
\begin{definition}
	If $A,B\subset\frac{1}{m}\mathbb{Z}^2$, we define the discrete smash sum $A\oplus B$ as follows. Let $C_0=A\cup B$, and for each $x_i\in \{x_1,...,x_n\}=A\cap B$, start a simple random walk at $x_i$ and stop it upon exiting $C_{i-1}$. Let $y_i$ be its final position, and define $C_{i}=C_{i-1}\cup\{y_i\}$. Then $A\oplus B:=C_n$ is a random set.
	
	As proven in \cite[Theorem 1.3]{Levine_2010}, if we instead take domains $A,B\subset\mathbb{R}^2$, the smash sums $A_m\oplus B_m$ of
	\[A_m:=\frac{1}{m}\mathbb{Z}^2\cap A,\qquad B_m:=\frac{1}{m}\mathbb{Z}^2\cap B\]
	approach a deterministic limit, which we label $A\oplus B$. Figure \ref{squarefig} (taken from \cite{darrow2020convergence}) gives an example of this.
\end{definition}
\begin{figure}[H]
	\centering
	\begin{tikzpicture}
		\node[anchor=south west,inner sep=0] (image) at (0,0) {\includegraphics[scale=.3]{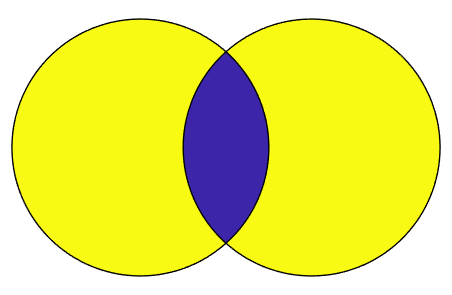}};
		\node[text width=2cm,rotate=0] at (2.3,-.2) {$A\cup B$};
		{}
		\path[ultra thick,->,>=stealth] (3.7,1.2) edge[bend left] (4.9,1.2);
		\node[anchor=south west,inner sep=0] (image) at (5,0) {\includegraphics[scale=.3]{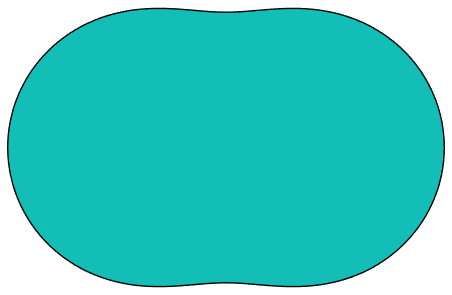}};
		\node[text width=2cm,rotate=0] at (7.3,-.2) {$A\oplus B$};
		{}
	\end{tikzpicture}%
	\caption{The smash sum $A\oplus B$ is the deterministic limit of an IDLA-type growth process starting from the sets $A$ and $B$, representing the dispersal of particles in $A\cap B$ (in dark blue above) to the edges of $A\cup B$ (in yellow above). }\label{squarefig}
\end{figure}
The convergence $A_m(m^2s)\to D_s$ was shown originally by Levine and Peres \cite{Levine_2010}. In \cite[Theorem 3.1]{darrow2020convergence}, we have recently shown the following convergence rate for this scaling limit, in the special case that $D_s$ is a \emph{smooth} flow---that is, that $s\mapsto D_s$ is a smooth isotopy for $s\in[0,T]$.
\begin{lemma}\label{narrow}
	Suppose $D_s$ is a smooth flow arising from a concentrated mass distribution. For large enough $m$, the fluctuation of the associated IDLA $A_m(t)$ is bounded as
	\[
	\mathbb{P}\bigg\{(D_{s})_{Cm^{-3/5}}\cap\frac{1}{m}\mathbb{Z}^2\subset A_m(m^2s)\subset (D_{s})^{Cm^{-3/5}}\;\text{for all}\;s\in[0,T]\bigg\}^c\leq e^{-m^{2/5}}\]
	for a constant $C$ depending on the flow, where $(D_s)^\eps$ and $(D_s)_\eps$ denote outer- and inner-$\eps$-neighborhoods of $D_s$, respectively.
\end{lemma}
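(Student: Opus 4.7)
The plan is to first establish the fluctuation bound at a single fixed time $s$, and then extend it uniformly to $s\in[0,T]$ via time discretization and a union bound. The central object is the \emph{IDLA odometer} $u_m(x)$, the (random) number of times the walks defining $A_m(m^2s)$ visit a lattice point $x$. A standard computation following Levine--Peres shows that $\mathbb{E}[u_m]$ satisfies a discrete Poisson equation with source $\sigma_{m,n}-\vc{1}_{A_m(m^2s)}$, so $m^{-2}\mathbb{E}[u_m(\cdot)]$ approximates the continuum odometer $\gamma_s$ that solves the obstacle problem for the smash sum $D_s=D_0\oplus Q_1^{s_1}\oplus\cdots\oplus Q_N^{s_N}$. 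Since the flow is smooth, elliptic regularity gives $\gamma_s\in C^{1,1}$ together with the quadratic decay $\gamma_s(y)\asymp d(y,\partial D_s)^2$ near the boundary, which drives all subsequent bounds.

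First I would prove the single-time inclusion. For the inner bound, fix a lattice point $x$ at distance at least $Cm^{-3/5}$ inside $D_s$ and bound the probability that no walk has reached $x$ by time $m^2s$. Following the Lawler--Bramson--Griffeath construction as refined by Jerison--Levine--Sheffield, one rewrites this event in terms of a sum over ``avoiding'' walks and controls it by a martingale whose quadratic variation is dominated by the discrete Green's function on $D_s$. The exponent $3/5$ arises by balancing scales: near the boundary $\gamma_s$ scales like $d^2$, so the expected number of walks settling within distance $d=m^{-3/5}$ of $\partial D_s$ is of order $m^2\cdot m^{-6/5}=m^{4/5}$, while Gaussian fluctuations are of order $m^{2/5}$, producing the tail $e^{-cm^{2/5}}$. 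For the outer inclusion, I would invoke the abelian/least-action principle for the divisible sandpile: an occupied point outside $(D_s)^{Cm^{-3/5}}$ would force the odometer to be positive there, contradicting the continuum decay of $\gamma_s$. A union bound over the $O(m^{6/5})$ lattice points in an $O(m^{-3/5})$-neighborhood of $\partial D_s$ preserves the $e^{-cm^{2/5}}$ estimate after enlarging $C$.

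Second I would globalize in time. Because $s\mapsto D_s$ is a smooth isotopy, the symmetric difference satisfies $D_s\,\triangle\,D_{s'}\subset(\partial D_s)^{C|s-s'|}$, so on a partition $0=s_0<s_1<\cdots<s_K=T$ of mesh $m^{-3/5}$, the event in the lemma at any $s\in[s_k,s_{k+1}]$ is implied by the events at $s_k$ and $s_{k+1}$ with constant $2C$. Here $K=O(m^{3/5})$, so a union bound yields total failure probability at most $Ke^{-cm^{2/5}}\leq e^{-m^{2/5}}$ for large $m$. The main obstacle is the sharp single-time estimate: getting exponent $3/5$ rather than the $1/3$ a crude Azuma bound would give requires exploiting the quadratic degeneracy of $\gamma_s$ at $\partial D_s$ together with a careful martingale decomposition against the discrete Green's function, both of which depend on the smoothness of the flow. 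Once the sharp per-time bound is in hand, the polynomial factor $m^{3/5}$ from the time grid is comfortably absorbed into the stretched-exponential rate.
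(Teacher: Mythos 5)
This lemma is not actually proved in the present paper; it is imported from the earlier work \cite{darrow2020convergence}. The closest argument the paper does give is the appendix proof of the divisible-sandpile analogue (Theorem \ref{narrowsand}), which follows the Jerison--Levine--Sheffield ``early implies late'' strategy: a thin-tentacles lemma shows an $a/m$-early point must carry a blob of occupied mass of volume $\gtrsim a^2/m^2$ outside $D_s$; a discrete-harmonic conservation identity $\sum(\nu_{\zeta,t}-\sigma_{m,t})H_\zeta=0$ turns that surplus into a deficit inside, i.e.\ a late point; and a separate estimate rules out $\ell/m$-late points for $\ell\gtrsim m^{2/5}$. Early and late are thus eliminated together, not by two independent one-sided arguments.

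Your sketch contains a genuine gap at precisely the point where the paper's mechanism is essential: the outer bound. You write that an occupied point outside $(D_s)^{Cm^{-3/5}}$ ``would force the odometer to be positive there, contradicting the continuum decay of $\gamma_s$,'' invoking the abelian/least-action principle for the divisible sandpile. For the \emph{deterministic} sandpile this is a legitimate deterministic contradiction, but for IDLA the odometer is random, and on the early event it simply \emph{is} positive outside $(D_s)^{Cm^{-3/5}}$ --- there is no contradiction to derive, only a probability to bound. You cannot borrow the sandpile's least-action principle to control the random aggregate; the random process is not the argmin of the least-action variational problem. What you need (and what the appendix supplies in the sandpile case, and \cite{10.2307/23072157} and \cite{darrow2020convergence} supply for IDLA) is the pairing of (i) a thin-tentacles lemma to show an early point creates a quantifiable excess, (ii) a discrete-harmonic/martingale identity transferring that excess to a deficit, and (iii) an Azuma-type tail on the deficit. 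Without step (ii), your inner and outer arguments do not talk to each other, and the outer one has no probabilistic content.

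Two smaller points. Your balancing heuristic for the exponent $3/5$ is in the right spirit, but ``the expected number of walks settling within distance $d$ of $\partial D_s$ is of order $m^2\cdot m^{-6/5}=m^{4/5}$'' treats the near-boundary annulus as if the odometer were of order one there; the quadratic degeneracy $\gamma_s\asymp d^2$ instead gives an odometer of size $m^{-6/5}\cdot m^2 = m^{4/5}$ per site, and the relevant comparison is against the martingale's quadratic variation, not a count of walks, so the scale-matching needs to be redone carefully. Finally, the union-bound conclusion $Ke^{-cm^{2/5}}\leq e^{-m^{2/5}}$ requires $c>1$; you should phrase the per-time estimate as $e^{-c'm^{2/5}}$ with $c'$ adjustable by enlarging $C$, so the polynomial factor $K=O(m^{3/5})$ and the loss from $c'$ to $1$ can both be absorbed.
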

In other words, the fluctuations of the random set $A_m(m^2s)$ are unlikely to be larger in magnitude than $Cm^{-3/5}$, for some fixed $C>0$. We will also use this to bound the maximum fluctuations of a closely related process---the \emph{divisible sandpile growth}---defined as follows:
\begin{definition}[Divisible Sandpile]
	Suppose we have a concentrated mass distribution with initial set $D_0$ giving rise to the sequences $\{z_{m,i}\}$. The \emph{divisible sandpile aggregation} associated with our mass distribution is characterized by its \emph{final mass distributions} $\nu_{m,t}$. Let $\nu_{m,0}:=\mathbf{1}_{A_m(0)}=\mathbf{1}_{D_0\cap \frac{1}{m}\mathbb{Z}^2}$, and define $\nu_{m,n}$ inductively as follows. 
	
	Given $\nu_{m,n}$, define the intermediate function $\nu_{m,n}^0=\nu_{m,n}+\mathbf{1}_{\{z_{m,n+1}\}}$. At each time step $t$, choose a point $z=z(t)\in\supp\nu_{m,n}^t$ such that $\nu_{m,n}^t(z)>1$. Set
	\[\nu_{m,n}^{t+1}(z)=1,\qquad\nu_{m,n}^{t+1}(z\pm 1/m)=\nu_{m,n}^{t}(z\pm 1/m)+\frac{1}{4}(\nu_{m,n}^{t}(z)-1),\]
	\[\nu_{m,n}^{t+1}(z\pm i/m)=\nu_{m,n}^{t}(z\pm i/m)+\frac{1}{4}(\nu_{m,n}^{t}(z)-1).\]
	In other words, we define $\nu_{m,n}^{t+1}$ by taking the ``excess mass'' at $z$ in $\nu_{m,n}^t$ and splitting it evenly between the neighbors of $z$. For a large enough (but finite) $t'$, we will have $\nu_{m,n}^{t'}\leq 1$ everywhere, and the above process must stop; then define $\nu_{m,n+1}=\nu_{m,n}^{t'}$.
\end{definition}
The following proposition follows directly from the definition of the divisible sandpile:
\begin{proposition}
	Suppose $h:\frac{1}{m}\mathbb{Z}^2\to\mathbb{R}$ is discrete harmonic on $\supp\nu_{m,n}$. Then,
	\[\sum_{z\in\frac{1}{m}\mathbb{Z}^2} h(z)\nu_{m,n}(z)=\sum_{z\in\frac{1}{m}\mathbb{Z}^2\cap D_0}h(z)\sigma_{m,n}(z).\]
\end{proposition}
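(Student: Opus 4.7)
My plan is to proceed by induction on $n$, exploiting the fact that each individual sandpile toppling preserves the weighted sum against a function that is discrete harmonic at the toppling site. The base case $n=0$ is immediate: both sides equal $\sum_{z \in \frac{1}{m}\mathbb{Z}^2 \cap D_0} h(z)$ directly from the definitions $\nu_{m,0} = \vc{1}_{D_0 \cap \frac{1}{m}\mathbb{Z}^2}$ and $\sigma_{m,0} = \vc{1}_{D_0 \cap \frac{1}{m}\mathbb{Z}^2}$.

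For the inductive step, I decompose the update $\nu_{m,n} \to \nu_{m,n+1}$ into the source addition $\nu_{m,n} \to \nu_{m,n}^0 = \nu_{m,n} + \vc{1}_{\{z_{m,n+1}\}}$ followed by the finite sequence of topplings $\nu_{m,n}^t \to \nu_{m,n}^{t+1}$ terminating at $\nu_{m,n+1}$. The source step changes $\sum h \cdot \nu$ by exactly $h(z_{m,n+1})$, which matches the corresponding increment $\sum h \cdot \sigma_{m,n+1} - \sum h \cdot \sigma_{m,n}$ on the right-hand side. A single toppling at a site $z$ redistributes the excess $\nu_{m,n}^t(z)-1$ uniformly to the four nearest neighbors of $z$, so the resulting change in $\sum h \cdot \nu$ is
\[
\bigl(\nu_{m,n}^t(z) - 1\bigr)\Bigl(-h(z) + \tfrac{1}{4}\!\sum_{w \sim z} h(w)\Bigr),
\]
which vanishes whenever $h$ is discrete harmonic at $z$. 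Summing over topplings and combining with the source step completes the inductive step.

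The one point meriting care is that $h$ is only assumed harmonic on $\supp \nu_{m,n}$, so I must check that every toppling site used throughout the construction of $\nu_{m,n}$ lies in this set. Since a toppling at $z$ can only move mass from $z$ to its four adjacent lattice points, the intermediate supports are nested, $\supp \nu_{m,k}^t \subset \supp \nu_{m,k+1} \subset \cdots \subset \supp \nu_{m,n}$, so the harmonicity hypothesis applies at every toppling site encountered along the way. I don't anticipate any serious obstacle here; the statement is essentially an abelian-style conservation law, directly parallel to the familiar observation that Laplacians of harmonic functions vanish at toppling sites in a sandpile.
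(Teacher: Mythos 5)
Your proof is correct and is essentially the argument the paper has in mind: the paper states the proposition without proof, saying only that it "follows directly from the definition of the divisible sandpile," and your induction is precisely the direct verification — source additions track the increments of $\sigma_{m,n}$, each toppling at a site $z$ changes $\sum h\nu$ by $(\nu_{m,n}^t(z)-1)(\tfrac14\sum_{w\sim z}h(w)-h(z))=0$ by discrete harmonicity, and the nesting $\supp\nu_{m,k}^t\subset\supp\nu_{m,k+1}\subset\cdots\subset\supp\nu_{m,n}$ (since topplings set mass at the site to $1$, not $0$, and the source points $z_{m,i}$ all lie in $D_0\subset\supp\nu_{m,0}$) guarantees the harmonicity hypothesis applies at every toppling site encountered.
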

Finally, we find the same $m^{-3/5}$-bound on maximum fluctuations for the divisible sandpile as we do for IDLA; the following theorem is proved in the \hyperref[appn]{Appendix}:
\begin{theorem}\label{narrowsand}
	Suppose $D_\tau$ is a smooth flow arising from a concentrated mass distribution. For large enough $m$ and any time $s\in[0,T]$, the fluctuations of the occupied set $\supp\nu_{m,m^2s}$ are bounded as
	\[(D_{s})_{Cm^{-3/5}}\cap\frac{1}{m}\mathbb{Z}^2\subset \supp\nu_{m,m^2s}\subset (D_{s})^{Cm^{-3/5}}\]
	for a constant $C$ depending on the flow.
\end{theorem}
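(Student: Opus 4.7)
My plan is to prove Theorem \ref{narrowsand} by comparing the divisible sandpile to IDLA, for which Lemma \ref{narrow} already provides the corresponding bound. Both processes admit a natural \emph{odometer function} on $\tfrac{1}{m}\mathbb{Z}^2$: the sandpile odometer $u_m(z)$ records the total mass emitted from $z$ during topplings, while the expected IDLA odometer $w_m(z) := \mathbb{E}[\#\{\text{visits to } z\}]$ records the mean number of random-walk visits. These satisfy the discrete Poisson identities
\[
\Delta_m u_m = \nu_{m,m^2 s} - \sigma_{m,m^2 s}, \qquad \Delta_m w_m = \mathbb{E}\bigl[\vc{1}_{A_m(m^2 s)}\bigr] - \sigma_{m,m^2 s},
\]
where $\Delta_m$ is the discrete Laplacian; in particular, the support of $\nu_{m,m^2 s}$ essentially coincides with $\{u_m > 0\} \cup \supp \sigma_{m,m^2 s}$.

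For the outer inclusion, I would invoke the \emph{least action principle} of Levine--Peres, which gives $u_m \leq w_m$ pointwise. Combined with Lemma \ref{narrow}---which forces $\mathbb{E}[\vc{1}_{A_m(m^2 s)}]$ to vanish outside $(D_s)^{C m^{-3/5}}$ up to a tail of exponentially small probability---this yields that $w_m$, and hence $u_m$, must vanish off a slightly larger neighborhood. Since the sandpile is deterministic, we conclude $\supp \nu_{m,m^2 s} \subset (D_s)^{C' m^{-3/5}}$ outright.

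For the inner inclusion, I would recast both $u_m$ and $w_m$ as solutions of obstacle problems with a common obstacle built from the discrete potential of $\sigma_{m,m^2 s}$; in each case the occupied region coincides with the non-contact set. Any discrepancy between $\supp \nu_{m,m^2 s}$ and a typical realization of $A_m(m^2 s)$ is then controlled by $\|u_m - w_m\|_\infty$, which I would estimate via discrete-harmonic-measure bounds: since $w_m - u_m \geq 0$ is discrete-harmonic on the interior of the occupied region, it is bounded by its boundary values, which are in turn controlled by the $m^{-3/5}$ IDLA fluctuation of Lemma \ref{narrow}.

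The main obstacle is the boundary-layer analysis: the least action principle gives only one-sided control, so the reverse estimate $u_m \gtrsim w_m - O(m^{-3/5})$ near the interface requires unpacking the obstacle problem and estimating harmonic measure on narrow annular regions, where the smoothness of the flow $D_s$ is essential. Once this quantitative comparison is in place, the theorem follows uniformly in $s \in [0,T]$ from the uniform-in-$s$ statement of Lemma \ref{narrow}.
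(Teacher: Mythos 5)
Your approach is genuinely different from the paper's, and the two halves fare very differently.

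For the outer inclusion your idea is essentially sound, and in fact can be made cleaner than what you wrote: the least action principle applies \emph{per realization}, not just in expectation. For each IDLA sample path $\omega$, the IDLA odometer $u_\omega$ satisfies $u_\omega\geq 0$ and $\sigma_{m,t}+\Delta_m u_\omega=\vc{1}_{A_m(t)}\leq 1$, so LAP gives $u_m\leq u_\omega$ pointwise for \emph{every} $\omega$, hence $\supp\nu_{m,t}\subset A_m(t)$ (up to one lattice unit) for every realization. Since Lemma~\ref{narrow} guarantees the existence of at least one realization with $A_m(m^2s)\subset(D_s)^{Cm^{-3/5}}$, the deterministic set $\supp\nu_{m,m^2s}$ is contained in $(D_s)^{Cm^{-3/5}}$ outright. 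As you phrased it---going through $w_m=\mathbb{E}[u_\omega]$ and saying $\mathbb{E}[\vc{1}_{A_m}]$ ``vanishes'' outside the neighborhood---there is a gap, since the expectation is only exponentially small, not zero, and the divisible sandpile odometer can take arbitrarily small positive values; but the realization-wise version closes it.

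The inner inclusion is where the proposal breaks down. LAP gives only $u_m\leq u_\omega$; there is no companion lower bound, and the sandpile domain can genuinely be strictly smaller than every IDLA realization, so no comparison argument alone can certify that $\supp\nu_m$ is not too small. Your proposed repair has two concrete problems. First, $w_m$ is \emph{not} the solution of an obstacle problem---it is an expectation of random odometers---so the ``common obstacle'' framing does not apply to it; only $u_m$ (and the continuum limit $u^*$) solve obstacle problems. Second, the harmonic-measure estimate on $w_m-u_m$ is circular: $w_m-u_m$ is (approximately) discrete harmonic only on the interior of $\supp\nu_{m,m^2s}$, and to apply a maximum-principle/boundary-value argument there you must already know where $\partial\supp\nu_{m,m^2s}$ sits---which is precisely what the inner inclusion is meant to determine. (There is also a quantitative mismatch you should be alert to: an $L^\infty$ bound of $O(m^{-3/5})$ on the odometer discrepancy translates, via the quadratic behavior of the obstacle-problem solution near the free boundary, to only an $O(m^{-3/10})$ bound on the set discrepancy, so the odometer estimate you would need is $O(m^{-6/5})$.)

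By contrast, the paper's proof makes no comparison to IDLA at all. It adapts the Jerison--Levine--Sheffield ``early point implies late point / no late points'' scheme directly to the sandpile: it defines a \emph{stopped} sandpile $\nu_{\zeta,t}$ and uses the identity $\sum H_\zeta\cdot(\nu_{\zeta,t}-\sigma_{m,t})=0$ for grid-harmonic $H_\zeta$ to rule out early and late points (Lemmas~\ref{earlysand} and~\ref{latesand}). The one place randomness is reintroduced is Lemma~\ref{tentaclessand} (thin tentacles), which is proved by approximating the deterministic sandpile in law by intermediate random processes $\nu_{m,t}^\ell$, carrying over the probabilistic thin-tentacles estimate, and letting $\ell\to\infty$. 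If you want to salvage a comparison-based proof, you will need a new idea for the lower bound that does not route through LAP.
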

To simplify notation, we will continue to write $\eps_m=Cm^{-3/5}$, as in Lemmas \ref{narrow} and \ref{narrowsand}. We further define
\[F^s_m=(D_s)^{\eps_m}\setminus(D_s)_{\eps_m}.\]
\section{Main results}\label{mainresults}
Our two primary results pertain to scaling limits of the fluctuations of $A_m(t)$ away from its deterministic limit. Following \cite{jerison2014}, we quantify these fluctuations using the following random functions.

First, the \emph{(time $s$) error function} $E^s_m:\frac{1}{m}\mathbb{Z}^2\to\mathbb{R}$ is defined as
\[E^s_m(x):=m\left(1_{A_{m}(m^2s)}(x)-\nu_{m,m^2s}(x)\right).\]
This takes a positive value on ``early'' points, where the IDLA $A_m$ has reached by time $m^2s$ but where the expected set---represented here by the divisible sandpile occupied set---has not yet reached. It takes a negative value on ``late'' points, where the divisible sandpile occupied set has reached but the IDLA has not.

Although $E^s_m$ itself does not converge (in $m$) to a well-defined random variable, our primary objects of interest are the limits of inner products 
\[(E^s_m,u)=m^{-2}\sum_{x\in\frac{1}{m}\mathbb{Z}^2}E^s_m(x)u(x)=m^{-1}\sum_{x\in\frac{1}{m}\mathbb{Z}^d}u(x)\left(1_{A_{m}(m^2s)}(x)-\nu_{m,m^2s}(x)\right).\]
We can think of $(E^s_m,u)$ as a snapshot of the discrepancy at the fixed time $s$, weighted by the function $u\in C^4(\mathbb{R}^2)$.

Through the following theorem, we show that $E^s_m$ converges weakly to a Gaussian random field on the fixed-time curve $\partial D_s$:
\begin{theorem}\label{fixedtime}
	Suppose $u\in C^4(\mathbb{R}^d)$. The random variables $(E^s_m,u)$ converge in law to a normal variable of mean 0 and variance
	\[\int_{D_s}|\psi|^2(1-\sigma_s),\]
	where $\psi$ solves the Laplace problem on $D_s$ with boundary values $\psi|_{\partial D_s}\equiv u|_{\partial D_s}$.
\end{theorem}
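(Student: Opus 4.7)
The plan is to adapt the martingale strategy of Jerison--Levine--Sheffield \cite{jerison2014} to the extended-source setting, using the sandpile identity of the preceding Proposition together with the $\delta^{3/5}$ concentration bounds of Lemmas \ref{narrow} and \ref{narrowsand}. In outline, I would: (i) localize $E^s_m$ to the annulus $F^s_m$ using the concentration bounds; (ii) replace the test function $u$ by a discrete harmonic approximation $h_m$; (iii) rewrite $(E^s_m, h_m)$ as a martingale sum via the abelian identity; and (iv) identify the limit by computing the quadratic variation and invoking the martingale CLT.

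For steps (i) and (ii), let $\psi$ be the harmonic extension of $u|_{\partial D_s}$ to $D_s$, and let $h_m$ be the discrete harmonic function on $(D_s)^{2\eps_m}\cap\tfrac{1}{m}\mathbb{Z}^2$ matching $u$ on the complement. By Lemmas \ref{narrow} and \ref{narrowsand}, on an event of probability $1 - O(e^{-m^{2/5}})$ the support of $E^s_m$ lies in $F^s_m$. Standard discrete potential theory then gives $h_m \to \psi$ uniformly on compact subsets of $\op{int}(D_s)$ together with $|h_m - u| = O(\eps_m)$ on $F^s_m$; since $|F^s_m \cap \tfrac{1}{m}\mathbb{Z}^2| = O(m^{7/5})$, this produces $(E^s_m, u) = (E^s_m, h_m) + O(m^{-1/5})$. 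For step (iii), applying the sandpile identity to $h_m$ and using the defining property of the IDLA walks collapses the inner product to
\[(E^s_m, h_m) = m^{-1}\sum_{i=1}^{m^2 s} M_i, \qquad M_i := h_m(z'_i) - h_m(z_{m,i}),\]
and on the localization event, optional stopping makes the $M_i$ martingale differences with respect to the natural filtration $\mathcal{F}_i$; the uniform bound $|M_i| \leq 2\|h_m\|_\infty$ handles Lindeberg's condition.

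For step (iv), the discrete It\^o formula gives
\[\mathbb{E}[M_i^2 \mid \mathcal{F}_{i-1}] = \frac{1}{2m^2}\sum_{y}|\nabla h_m(y)|^2\, G^{\mathrm{disc}}_{A_m(i-1)}(z_{m,i}, y) + o(m^{-2}),\]
where $G^{\mathrm{disc}}_\Omega(x, y)$ is the expected visit count to $y$ for a walk from $x$ killed upon leaving $\Omega$. The essential abelian observation is that $\sum_{i=1}^{m^2 s} G^{\mathrm{disc}}_{A_m(i-1)}(z_{m,i}, y)$ is exactly the mean IDLA odometer at $y$, which in the scaling $u_m / m^2$ converges to the divisible-sandpile odometer $u_s$. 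The latter is characterized by $-\tfrac{1}{4}\Delta u_s = \sigma_s - 1_{D_s}$ on $D_s$ together with the Stefan free boundary conditions $u_s = \partial_\nu u_s = 0$ on $\partial D_s$. In the continuum limit, the quadratic variation becomes
\[V_m := m^{-2}\sum_{i=1}^{m^2 s}\mathbb{E}[M_i^2\mid \mathcal{F}_{i-1}] \;\xrightarrow{\;\mathbb{P}\;}\; \tfrac{1}{2}\int_{D_s}|\nabla \psi|^2 u_s\, dy,\]
and two applications of Green's formula, using $\Delta \psi = 0$ and the free boundary conditions, reduce this to $\tfrac{1}{4}\int_{D_s}|\psi|^2\Delta u_s\, dy = \int_{D_s}|\psi|^2(1-\sigma_s)\, dy$, matching the claimed variance. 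The martingale CLT then concludes.

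The delicate step is upgrading the abelian identity $\sum_i G^{\mathrm{disc}}_{A_m(i-1)}(z_{m,i},\cdot) = \mathbb{E}[u_m]$ from a statement about means into an in-probability convergence of $V_m$: one needs pathwise concentration of $u_m/m^2$ around $u_s$. I expect this to follow by propagating the $O(\eps_m)$-concentration of $A_m(i)$ around $D_{i/m^2}$ from Lemma \ref{narrow} through Green's-function stability estimates under small Hausdorff perturbations of the domain, uniformly in $i \leq m^2 s$. Handling this time-dependence---since the domain seen by the $i$th walk varies with $i$---will be the main technical obstacle.
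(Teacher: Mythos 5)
Your steps (i)--(iii) match the paper's Step~1 essentially exactly: the paper also localizes to $F^s_m$ using Lemmas~\ref{narrow} and~\ref{narrowsand}, replaces $u$ with the grid-harmonic approximant $\psi_{(m)}$ (your $h_m$), and bounds the replacement error by $O(m\eps_m^2)=O(m^{-1/5})$. The martingale decomposition in your step (iii) is the paper's $X_{m,t}$.

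Your step (iv), however, is a genuinely different route from the paper's, and this is where the proposal has an unclosed gap. You follow the Jerison--Levine--Sheffield strategy: write the predictable quadratic variation via the domain Green's function, recognize the inner sum $\sum_i G^{\mathrm{disc}}_{A_m(i-1)}(z_{m,i},\cdot)$ as a compensator for the IDLA odometer, pass to the continuum divisible-sandpile odometer $u_s$, and then integrate by parts twice against $|\nabla\psi|^2 = \tfrac12\Delta(\psi^2)$ using the free-boundary conditions $u_s = \partial_\nu u_s = 0$ on $\partial D_s$. Your normalizations check out: with $\Delta u_s = 4(1-\sigma_s)$ on $D_s$, your chain of Green's identities does recover $\int_{D_s}|\psi|^2(1-\sigma_s)$. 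But the odometer route requires two facts that are not available off the shelf for the extended-source model: (a) a pathwise concentration result saying that the random compensator $\sum_i G^{\mathrm{disc}}_{A_m(i-1)}(z_{m,i},y)$ is uniformly close to its deterministic limit $m^2 u_s(y)$ for all $y$ in the bulk (not merely that its expectation converges), and (b) that the IDLA odometer scaling limit agrees with the divisible-sandpile odometer $u_s$ satisfying those Stefan free-boundary conditions. You flag (a) as the delicate step, and correctly so; (b) is implicitly used but not flagged. Both would need new proofs: Lemma~\ref{narrow} gives Hausdorff control on the sets $A_m(i)$, but converting set-concentration into pointwise Green's-function and odometer concentration uniformly over $i\leq m^2 s$ is nontrivial, precisely because $G^{\mathrm{disc}}$ has a logarithmic singularity at the source point and varies with the random domain.

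The paper avoids this entirely (following Sadovnik rather than JLS): instead of the odometer, it introduces the explicit ``algebraic'' process
\[Z_m(t) = m^{-2}\sum_{x\in A_m(t\wedge\tau^*)\setminus D_0}|\psi_{(m)}(x)|^2 - m^{-2}\sum_{i\leq t\wedge\tau^*}|\psi_{(m)}(z_{m,i})|^2,\]
shows that $N_m := S_m - Z_m$ is a martingale with increments $O(m^{-4})$ so that $N_m(m^2 s)\to 0$ in $L^2$, and then evaluates $Z_m(m^2 s)$ directly on the concentration event, where it is manifestly close to $m^{-2}\sum|\psi_{(m)}|^2(\vc{1}_{D_s}-\sigma_s)\to\int_{D_s}|\psi|^2(1-\sigma_s)$. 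No Green's function, no odometer, no free-boundary conditions. If you want to run the odometer argument you will have to prove (a) and (b) from scratch; the auxiliary-martingale trick is the cleaner path given the $m^{-3/5}$ input that is already available.
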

Of course, we can turn this into a covariance formula using a polarization identity; if $u,v\in C^4(\mathbb{R}^2)$, the variables $(E^s_m,u)$ and $(E^s_m,v)$ form a joint Gaussian random variable with covariance
\begin{equation}\label{covariance1}
	\int_{D_s}\psi\varphi(1-\sigma_s),
\end{equation}
where $\psi$ and $\varphi$ are harmonic on $D_s$ with $\psi|\partial D_s\equiv u$ and $\varphi|\partial D_s\equiv v$, respectively.

A more sensitive metric is given by the \emph{lateness function},
\[L^s_m=\sum_{n=1}^{\lfloor m^2s\rfloor}\frac{n}{m}1_{A_{m,n}\setminus A_{m,n-1}}-\sum_{n=1}^{\lfloor m^2s\rfloor}\frac{n}{m}\left(\nu_{m,n}-\nu_{m,n-1}\right).\]
Up to a scaling factor, the first term in this expression is the actual time of arrival at each point. The latter term is an approximation of the expected time of arrival, which we can see as follows.

Suppose $x\in\frac{1}{m}\mathbb{Z}^d$, and $\langle T_{m,x}\rangle$ is the expected time for $A_{m,n}$ to arrive at $x$. For a brief window around $T_x$, the quantity $\nu_{m,n}(x)$ lies strictly between $0$ and $1$---say, when $n\in\{n',n'+1,...,n'+\Delta-1\}$. As $\nu_{m,n}(x)$ is constant before $n'$ and after $n'+\Delta$, only the terms involving $\{n',n'+1,...,n'+\Delta-1\}$ contribute to the sum 
\begin{equation}\label{sumthing}
	\sum_{n=1}^{m^2s}n\left(\nu_{m,n}-\nu_{m,n-1}\right)(x).
\end{equation}
Of course, the increments $\left(\nu_{m,n}-\nu_{m,n-1}\right)(x)$ are non-negative, and 
\[\sum_{n=n'}^{n'+\Delta}\left(\nu_{m,n}-\nu_{m,n-1}\right)(x)=\nu_{m,n'+\Delta}(x)-\nu_{m,n'-1}(x)=1,\]
so the sum (\ref{sumthing}) is a weighted average of $\{n',n'+1,...,n'+\Delta\}$. As this interval is tightly centered around $\langle T_{m,x}\rangle$, we expect the overall sum to converge (in $m$) to $\langle T_{m,x}\rangle$. 

Our second result is in the same spirit as Theorem \ref{fixedtime}, showing now that the lateness function converges weakly to a 2D Gaussian random field:
\begin{theorem}\label{orderedtime}
	Suppose $u\in C_0^4(\mathbb{R}^d)$, with $\supp u\subset D_s$ (for instance, if $s\to\infty$). The random variables $(L^s_m,u)$ converge in law to a normal variable of mean 0 and variance
	\[2\int_0^{s}ds'\int_0^{s'}ds''\int_{D_{s''}}\psi_{s'}\psi_{s''}(1-\sigma_{s''}).\]
	where $\psi_t$ solves the Laplace problem on $D_t$ with boundary values $\psi_t|_{\partial D_t}\equiv u|_{\partial D_t}$.
\end{theorem}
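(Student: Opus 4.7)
My plan is to reduce Theorem \ref{orderedtime} to a multi-time version of Theorem \ref{fixedtime} by Abel summation. Writing $f_n(x) := 1_{A_{m,n}}(x) - \nu_{m,n}(x)$ and $N := \lfloor m^2 s \rfloor$, discrete summation by parts (using $f_0 \equiv 0$) gives
$$L^s_m(x) = \frac{N}{m}\,f_N(x) - \frac{1}{m}\sum_{n=0}^{N-1} f_n(x),$$
and pairing with $u$, together with the identity $(E^{n/m^2}_m, u) = m^{-1}\sum_x f_n(x) u(x)$, yields
$$(L^s_m, u) = \tfrac{N}{m^2}(E^s_m, u) - \tfrac{1}{m^2}\sum_{n=0}^{N-1}(E^{n/m^2}_m, u).$$
Because $\supp u \subset D_s$ and $u$ is continuous, $u|_{\partial D_s}\equiv 0$, so the harmonic extension $\psi_s$ of Theorem \ref{fixedtime} vanishes on $D_s$, forcing $(E^s_m, u) \to 0$ in probability. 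Slutsky's lemma then reduces the claim to identifying the weak limit of the Riemann sum $m^{-2}\sum_{n=0}^{N-1}(E^{n/m^2}_m, u)$, which formally approximates $\int_0^s(E^{s'}_m, u)\,ds'$.

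The heart of the argument is a joint multi-time refinement of Theorem \ref{fixedtime}: for any partition $0 \leq s'_1 < \cdots < s'_k \leq s$, the vector $\bigl((E^{s'_j}_m, u)\bigr)_{j=1}^k$ should converge in law to a centered Gaussian with covariances
$$C(s'_i, s'_j) = \int_{D_{\min(s'_i, s'_j)}} \psi_{s'_i}\,\psi_{s'_j}\,\bigl(1-\sigma_{\min(s'_i, s'_j)}\bigr),$$
where $\psi_t$ harmonically extends $u|_{\partial D_t}$ to $D_t$. I expect this to follow from the same Green's-function / martingale framework used to prove Theorem \ref{fixedtime}, applied to the joint filtration of the IDLA walk history: when $s'_i \leq s'_j$, the walks generating $A_m(m^2 s'_i)$ form a prefix of those generating $A_m(m^2 s'_j)$, so the fluctuations common to $E^{s'_i}$ and $E^{s'_j}$ are precisely those already present at time $\min(s'_i, s'_j)$, paired by $\psi_{s'_i}$ against the restriction of $\psi_{s'_j}$ to the earlier domain. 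The Cram\'er--Wold device then upgrades any linear combination to joint normality.

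Given the multi-time CLT plus a uniform $L^2$ bound on $(E^{s'}_m, u)$ (which drops out of the same martingale variance estimate), standard tightness and Riemann-sum arguments let us pass from $-m^{-2}\sum_n (E^{n/m^2}_m, u)$ to $-\int_0^s (E^{s'}_m, u)\,ds'$ in distribution. The latter is centered Gaussian with variance $\int_0^s\int_0^s C(s', s'')\,ds'\,ds''$; folding along the diagonal via $C(s', s'') = C(s'', s')$ and Fubini converts this into the claimed $2\int_0^s ds' \int_0^{s'} ds'' \int_{D_{s''}} \psi_{s'}\psi_{s''}(1-\sigma_{s''})$. The principal obstacle is the multi-time CLT, and specifically establishing that cross-time correlations are governed by $D_{\min}$ rather than $D_{\max}$; this will require carefully tracking how the boundary data $\psi_{s'_j}|_{\partial D_{s'_i}}$ mediates the coupling of the later and earlier martingale increments, and showing that the discrete harmonic measure on $\partial A_m(n-1)$ (for $n \leq m^2 s'_i$) couples $\psi_{s'_i}$ and $\psi_{s'_j}$ precisely through this restriction.
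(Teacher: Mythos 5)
Your Abel summation is precisely the algebraic step that underlies the paper's proof: the paper's martingale $M_m(m^2s)$, on the good event, is exactly the expression $s(E^s_m,u)-m^{-2}\sum_{\ell}(E^{\ell/m^2}_m,u)$ you derive (see (\ref{justusedinconclusion2})), and you have also correctly identified the limiting cross-covariance, including the key point that it is governed by $D_{\min}$ and $\sigma_{\min}$ --- this is exactly what emerges in the paper's computation of $Z_m$, where the cross terms carry $\vc{1}_{A_m(j\wedge k)}$ and $\sigma_{m,j\wedge k}$. However, the proposal has two genuine gaps which together comprise essentially all the analytical work, and both are, in effect, the work the paper does but which your decomposition defers rather than avoids.

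First, the multi-time CLT is stated but not proved, and you yourself flag it as the principal obstacle. Proving it by Cram\'er--Wold requires taking an arbitrary linear combination $\sum_j\alpha_j(E^{s'_j}_m,u)$, recognizing it as a (stopped) martingale in the IDLA step, and carrying out the full quadratic-variation analysis: namely, showing that the auxiliary process $N_m=S_m-Z_m$ (in the paper's notation, adapted to the linear combination) is itself a martingale vanishing in $L^2$, and then that $Z_m$ converges to the claimed bilinear form. This is where the $j\wedge k$ structure enters, and it is the substantive content of the theorem. The paper handles it in one shot by treating the entire Abel-summed expression as a single martingale $M_m(t)$ in the IDLA time $t$ and applying the Hall--Heyde CLT once; your route would repeat the same computation for each finite partition and each vector $\alpha$.

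Second, passing from the discrete sum $m^{-2}\sum_{n<N}(E^{n/m^2}_m,u)$ to a weak limit via finite-dimensional marginals requires more than a uniform $L^2$ bound: you need a modulus-of-continuity estimate of the form $\sup_m\mathbb{E}\big[|(E^{s'}_m,u)-(E^{s''}_m,u)|^2\big]\lesssim\omega(|s'-s''|)$ with $\omega(0^+)=0$, so that the Riemann sum can be approximated in $L^2$, uniformly in $m$, by a coarse sum $\frac{1}{K}\sum_j(E^{s_j}_m,u)$ to which the multi-time CLT applies. You mention this only as ``standard tightness,'' but it is not automatic and would itself need the martingale structure of $s'\mapsto(E^{s'}_m,u)$ to establish. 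Again, the paper avoids this issue entirely by never separating out the individual error functions: since $M_m(t)$ is a single martingale, the CLT applies directly with no discretization-in-$s$ to control. So while your reduction is algebraically identical to the paper's and your conjectured covariance is correct, the two steps left as sketches are precisely where the proof lives, and filling them in is not lighter than the paper's approach.
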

As before, if $u,v\in C_0^4(\mathbb{R}^d)$ with $\supp u,\supp v\subset D_s$, the variables $(L^s_m,u)$ and $(L^s_m,v)$ form a joint Gaussian random variable with covariance
\begin{equation}\label{covariance2}
	\int_0^{s}ds'\int_0^{s'}ds''\int_{D_{s''}}(\psi_{s'}\varphi_{s''}+\psi_{s''}\varphi_{s'})(1-\sigma_{s''}).
\end{equation}
After proving these results in the following sections, we will turn to an interesting application of Theorem \ref{orderedtime}. Namely, we will use Equations \ref{covariance1} and \ref{covariance2} to compute point-correlation functions, which encode the correlations between fluctuations at two different points. In some sense, point-correlation functions will be local versions of the above results.

\section{Proof of Theorem \ref{fixedtime}}\label{fixedtimeproof}
In our analysis below, we will make use of the grids
\[\mathcal{G}_m:=\left\{(x,y)\in\frac{1}{m}\mathbb{R}^2\;\bigg|\;x\in\frac{1}{m}\mathbb{Z}\;\text{or}\;y\in\frac{1}{m}\mathbb{Z}\right\}.\]
In particular, we will use the following notion of a \emph{grid harmonic function} on $\mathcal{G}_m$:
\begin{definition}
	A continuous function $\phi:U\subset\mathcal{G}_m\to\mathbb{R}$ is \emph{grid harmonic} if
	\[\Delta_h\phi(z):=\frac{m^2}{4}\left(\phi(z+1/m)+\phi(z-1/m)+\phi(z+i/m)+\phi(z-i/m)\right)-m^2\phi(z)=0\]
	on the nodes $z\in U\cap\frac{1}{m}\mathbb{Z}^2$, and $\phi$ is linear on each edge of $\mathcal{G}_m$.
\end{definition}
Finally, we will let $\mathscr{F}_{m,t}$ be the filtration generated by $A_{m}(t)$.
\begin{proof}[Proof of Theorem \ref{fixedtime}, Step 1]\let\qed\relax
	We will first relate $(E^s_m,u)$ to a family of martingales and show that the difference converges in law to zero.
	
	Let $\eps_m=Cm^{-3/5}$, as in Lemmas \ref{narrow} and \ref{narrowsand}, and let $\psi_m$ be harmonic on $(D_s)^{2\eps_m}$ with boundary values $u|_{\partial (D_s)^{2\eps_m}}$. Let $\psi_{(m)}$ solve the corresponding grid-Laplace problem on $\mathcal{G}_m\cap (D_s)^{2\eps_m}$. That is, $\psi_{(m)}$ is grid-harmonic in $\mathcal{G}_m\cap (D_s)^{2\eps_m}$, and 
	\[\psi_{(m)}|_{\partial (D_s)^{2\eps_m}}\equiv \psi_{m}|_{\partial (D_s)^{2\eps_m}}\equiv u|_{\partial (D_s)^{2\eps_m}}.\]
	Since $\psi_{(m)}=\psi_{m}$ on the boundary, standard estimates (for instance, see \cite[Theorem 3.5]{articleIfound}) give
	\begin{equation}\label{discharm1}
		\|\psi_{(m)}-\psi_m\|_\infty\leq C_1/m^2,
	\end{equation}
	where $C_1\sim \|\nabla^4 u\|$. Next, define the martingales
	\[M_m(t):=m^{-1}\left(\sum_{x\in A_{m}(t\wedge\tau^*)\setminus D_0}\psi_{(m)}(x)-\sum_{i=1}^{t\wedge\tau^*}\psi_{(m)}(z_{m,i})\right)=m^{-1}\sum_{A_{m}(t\wedge\tau^*)}\psi_{(m)}\cdot(1-\sigma_{m,t\wedge\tau^*}),\]
	where $\tau^*$ is the first time that $A_{m}(\tau^*)\not\subset (D_s)^{\eps_m}$. Note that $A_m(\tau^*)\subset (D_s)^{2\eps_m}$, so the function $\psi_{(m)}$ is defined on all of $A_m(\tau^*)$.
	
	Consider the event $\Eps$ that $\supp E^s_m \subset F^s_{m}$; by Lemmas \ref{narrow} and \ref{narrowsand}, this event occurs with probability $1-e^{-m^{2/5}}\upto 1$. In this case, $\tau^*\geq m^2s$, so---since $\psi_{(m)}$ is discrete harmonic---we have $M_m(m^2s)=(E^s_m,\psi_{(m)})$. To relate this to $(E^s_m,u)$, we first want to bound $\sup\nolimits_{F^s_{m}}|u-\psi_m|$. Suppose $x\in F^s_m$ achieves this supremum, and choose $x'\in\partial (D_s)^{2\eps_m}$ such that $|x-x'|\leq 4\eps_m$. Now, $\partial_i\psi_m$ solves the Laplace equation with boundary values $\partial_iu|_{\partial (D_s)^{2\eps_m}}$, so by the maximum principle, 
	\[|\partial_i\psi_m|\leq\sup\nolimits_{\partial D^{2\eps_m}}|\partial_iu|\leq \sup\nolimits_{\left(\bigcup_m(D_s)^{2\eps_m}\right)}|\nabla u|.\]
	In particular, $|\nabla\psi_m|\leq C_1=C_1(u)$ for all $m$, choosing a larger $C_1$ if necessary. Without loss of generality, we can take $C_1\geq \sup_{\left(\bigcup_mD^{\eps_m}\right)}|\nabla u|$. This implies that
	\begin{align*}
		\sup\nolimits_{F^s_m}|u-\psi_m|&=|u(x)-\psi_m(x)|\\
		&\leq|u(x)-u(x')|+|u(x')-\psi_m(x')|+|\psi_m(x')-\psi_m(x)|\\
		&=|u(x)-u(x')|+|\psi_m(x')-\psi_m(x)|\\
		&\leq 8\eps_mC_1.
	\end{align*}
	By (\ref{discharm1}), this means $\sup_{F^s_m}|u-\psi_{(m)}|=O(\eps_m)$. Thus, we find
	\begin{align}\label{justusedinconclusion}
		\left|(E^s_m,u)-(E^s_m,\psi_{(m)})\right|\leq m\cdot\op{Vol}(F^s_m)\sup\nolimits_{F^s_m}|u-\psi_{(m)}|=O(m\eps_m^2)=O(m^{-1/5}),
	\end{align}
	which converges to zero.
	
	Now, for any $\delta>0$, we can choose $m_0>0$ such that 
	\[\left|(E^s_m,u)-M_m(m^2s)\right|=\left|(E^s_m,u)-(E^s_m,\psi_{(m)})\right|\leq\delta\]
	on event $\Eps$ for any $m>m_0$. The probability of $\Eps^c$ tends to zero, so we know that $(E^s_m,u)-M_m(m^2s)$ converges in probability (and thus in law) to zero.
\end{proof}

\begin{proof}[Step 2]
	Now, we will show that the family of random variables $M_m(m^2s)$ converges in law to a zero-mean normal variable with variance $\int_{D_s}|\psi|^2(1-\sigma_s)$.
	
	For this step, we will follow the style of proof in \cite{eli16}. Define
	\[X_{m,t}=M_m(t)-M_m(t-1)=\begin{cases}m^{-1}(\psi_{(m)}(A_{m}(t)\setminus A_{m}(t-1))-\psi_{(m)}(z_{m,t})) & t\leq\tau^*\\
		0 & t>\tau^*
	\end{cases}.\]
	This is a mean-zero martingale difference array adapted to $\mathscr{F}_{m,t}$. The martingale central limit theorem stated in \cite[Theorem 3.2]{HALL198051} thus states that $M_m(m^2s)=\sum_{t\leq m^2s} X_{m,t}$ converges in law to a normal variable of mean 0 and variance $\int_{D_s}|\psi|^2(1-\sigma_s)$, so long as the following three conditions hold:
	\begin{enumerate}
		\item $\mathbb{E}\left[\max\nolimits_t |X_{m,t}|^2\right]$ is bounded in $m$. This also implies that the array is square-integrable, which is one of the hypotheses of the theorem.
		\item $\max\nolimits_t|X_{m,t}|\to 0$ in probability as $m\to\infty$.
		\item $\sum\nolimits_{t\leq m^2s} |X_{m,t}|^2\to \int_{D_s}|\psi|^2(1-\sigma_s)$ in probability as $m\to\infty$.
	\end{enumerate}
	As in [Sadovnik], we will handle the first two conditions by showing that $\mathbb{E}[\max_t|X_{m,t}|^a]\to 0$ for $a\geq 1$. This is clear from the following estimate:
	\begin{align*}|X_{m,t}|^a&=m^{-a}\left|\psi_{(m)}(A_{m}(t)\setminus A_{m}(t-1))-\psi_{(m)}(z_{m,t})\right|^a\\
		&\leq 2^am^{-a}\sup|\psi_{(m)}|^a\\
		&\leq 2^am^{-a}\sup\nolimits_{\left(\bigcup_mD^{\eps_m}\right)}|u|^a,
	\end{align*}
	from the maximum principle.
	
	For the final condition, define the random variables 
	\[S_m(t)=\sum_{i=1}^t|X_{m,i}|^2,\qquad Z_m(t)= m^{-2}\sum_{x\in A_{m}(t\wedge\tau^*)\setminus D_0}|\psi_{(m)}(x)|^2- m^{-2}\sum_{i=1}^{t\wedge\tau^*}|\psi_{(m)}(z_{m,i})|^2,\]
	\[N_m(t)=S_m(t)-Z_m(t).\]
	Our goal is to show that $N_m(m^2s)\to 0$ in probability, and thus that $S_m(t)$ can be well-approximated by the simpler variable $Z_m(t)$.
	
	For this, first note that $N_m$ satisfies the martingale property; we only need show this for time intervals before $\tau^*$, as $N_m$ remains constant thereafter. For $t\leq\tau^*$,
	\begin{align*}
		\mathbb{E}[&N_m(t)-N_m(t-1)|\mathscr{F}_{m,t-1}]\\
		&=\mathbb{E}\left[|X_{m,i}|^2-m^{-2}\left(\psi_{(m)}(A_{m}(t)\setminus A_{m}(t-1))^2-\psi_{(m)}(z_{m,t})^2\right)\big|\mathscr{F}_{m,t-1}\right]\\
		&=\mathbb{E}\bigg[m^{-2}\left(\psi_{(m)}(A_{m}(t)\setminus A_{m}(t-1))-\psi_{(m)}(z_{m,t})\right)^2\\
		&\qquad-m^{-2}\left(\psi_{(m)}(A_{m}(t)\setminus A_{m}(t-1))^2-\psi_{(m)}(z_{m,t})^2\right)\bigg|\mathscr{F}_{m,t-1}\bigg]\\
		&=\mathbb{E}\left[2m^{-2}\psi_{(m)}(z_{m,t})^2-2m^{-2}\psi_{(m)}(A_{m}(t)\setminus A_{m}(t-1))\psi_{(m)}(z_{m,t})\big|\mathscr{F}_{m,t-1}\right]\\
		&=2m^{-2}\psi_{(m)}(z_{m,t})\mathbb{E}\left[\psi_{(m)}(z_{m,t})-\psi_{(m)}(A_{m}(t)\setminus A_{m}(t-1))\big|\mathscr{F}_{m,t-1}\right]\\
		&=0.
	\end{align*}
	Since $S_m(1)=Z_m(1)=0$ (as $A_{m}(1)=\{z_{m,1}\}$ certainly), we have $N_m(1)=0$ and thus
	\[\mathbb{E}[N_m(m^2s)^2]=\mathbb{E}[\left(N_m(m^2s)-N_m(1)\right)^2]=\sum_{t=1}^{\lfloor m^2s\rfloor}\mathbb{E}[\left(N_m(t)-N_m(t-1)\right)^2]\]
	from the martingale property. Again taking $t\leq\tau^*$, we estimate
	\begin{align*}
		\mathbb{E}[\left(N_m(t)-N_m(t-1)\right)^2]&\leq 2\mathbb{E}[\left(S_m(t)-S_m(t-1)\right)^2]+2\mathbb{E}[\left(Z_m(t)-Z_m(t-1)\right)^2]\\
		&\leq 2\mathbb{E}\left[|X_{m,t}|^4\right]+2m^{-4}\mathbb{E}\left[\left(|\psi_{(m)}(A_{m}(t)\setminus A_{m}(t-1))|^2+|\psi_{(m)}(z_{m,t})|^2\right)^2\right]\\
		&\leq 8m^{-4}\sup|\psi_{(m)}|^4\\
		&\leq C_1m^{-4},
	\end{align*}
	where $C_1=C_1(u)$. This implies
	\[\mathbb{E}[N_m(m^2s)^2]=\sum_{t=1}^{\lfloor m^2s\rfloor}\mathbb{E}[\left(N_m(t)-N_m(t-1)\right)^2]\leq m^2s\cdot C_1m^{-4}=O(m^{-2}).\]
	Thus, $N_m(m^2s)\to 0$ in the $L^2$ norm, and thus also in probability.
	
	Finally, we show that $Z_m(m^2s)\to\int_{D_s}|\psi|^2(1-\sigma_s)$ in probability. From the above argument, this would imply that $S_m(m^2s)\to\int_{D_s}|\psi|^2(1-\sigma_s)$ in probability, which is exactly the third condition of the martingale central limit theorem.
	
	For this purpose, note that, on event $\Eps$ (where $\tau^*\geq m^2s$),
	\[Z_m(m^2s)=m^{-2}\sum_{A_{m}(m^2s)\setminus D_0}|\psi_{(m)}|^2- m^{-2}\sum_{i=1}^{\lfloor m^2s\rfloor}|\psi_{(m)}(z_{m,i})|^2=m^{-2}\sum_{x\in A_{m}(m^2s)}|\psi_{(m)}|^2(1-\sigma_{m,m^2s}).\]
	On Event $\Eps$, we know that $A_{m}(m^2s)$ differs from $D_s\cap\frac{1}{m}\mathbb{Z}^2$ by at most $O(m^2\eps_m)$ points; in this case,
	\[\bigg|Z_m(m^2s)-m^{-2}\sum_{ D_s\cap\frac{1}{m}\mathbb{Z}^2}|\psi_{(m)}|^2(1-\sigma_s)\bigg|=O(\eps_m),\]
	as $|\psi_{(m)}(x)|^2$ is uniformly bounded (as we saw above) in terms of $u$ and $\sum |\sigma_s-\sigma_{m,m^2s}|=O(1)$. In turn,
	\begin{align*}
		\bigg|m^{-2}\sum_{D_s\cap\frac{1}{m}\mathbb{Z}^2}|\psi_{(m)}|^2&(1-\sigma_s)-m^{-2}\sum_{ D_s\cap\frac{1}{m}\mathbb{Z}^2}|\psi_{m}|^2(1-\sigma_s)\bigg|\\
		&=\bigg|m^{-2}\sum_{D_s\cap\frac{1}{m}\mathbb{Z}^2}\left(|\psi_{(m)}|^2-|\psi_{m}|^2\right)(1-\sigma_s)\bigg|\\
		&=\bigg|m^{-2}\sum_{D_s\cap\frac{1}{m}\mathbb{Z}^2}\left(\psi_{(m)}-\psi_{m}\right)\left(\psi_{(m)}+\psi_{m}\right)(1-\sigma_s)\bigg|\\
		&=O(m^{-2}),
	\end{align*}
	using (\ref{discharm1}) in the final step. Now, we compare $m^{-2}\sum_{D_s\cap\frac{1}{m}\mathbb{Z}^2}|\psi_{m}|^2(1-\sigma_s)$ with $m^{-2}\sum_{D_s\cap\frac{1}{m}\mathbb{Z}^2}|\psi|^2(1-\sigma_s)$, where $\psi$ solves the Laplace equation on $D_s$ with $\psi|\partial D_s\equiv u|\partial D_s$. For this, suppose that $x\in\partial D_s$ maximizes $|\psi-\psi_m|$, and take $x'\in\partial (D_s)^{2\eps_m}$ such that $|x-x'|\leq 4\eps_m$. As in Step 1, choose $C_1$ such that $|\nabla u|,|\nabla\psi_m|\leq C_1$. Then we find
	\begin{align*}
		\sup\nolimits_{\partial D}|\psi-\psi_m|&=|\psi(x)-\psi_m(x)|\\
		&=|u(x)-\psi_m(x)|\\
		&\leq|u(x)-u(x')|+|u(x')-\psi_m(x')|+|\psi_m(x')-\psi_m(x)|\\
		&=|u(x)-u(x')|+|\psi_m(x')-\psi_m(x)|\\
		&\leq 8\eps_m C_1.
	\end{align*}
	Of course, $\psi-\psi_m$ is harmonic in $D_s$, so the maximum principle implies $\sup\nolimits_{ D_s}|\psi-\psi_m|\leq 8\eps_mC_1$. Arguing as before, we find
	\begin{align*}
		\bigg|m^{-2}\sum_{D_s\cap\frac{1}{m}\mathbb{Z}^2}|\psi_{m}|^2(1-\sigma_s)-m^{-2}\sum_{D_s\cap\frac{1}{m}\mathbb{Z}^2}|\psi|^2(1-\sigma_s)\bigg|=O(\eps_m).
	\end{align*}
	Putting these inequalities together shows that
	\[\bigg|Z_m(m^2s)-m^{-2}\sum_{D_s\cap\frac{1}{m}\mathbb{Z}^2}|\psi|^2(1-\sigma_s)\bigg|=O(\eps_n)\]
	on Event $\Eps$. Since $P(\Eps)\upto 1$, this implies that the above difference converges in probability to zero. Finally, $m^{-2}\sum_{D_s\cap\frac{1}{m}\mathbb{Z}^2}|\psi|^2(1-\sigma_s)$ converges to $\int_{D_s}|\psi|^2(1-\sigma_s)$, so the theorem is proved.
\end{proof}
\section{Proof of Theorem \ref{orderedtime}}\label{orderedtimeproof}
We prove a slight generalization of this result, in the case that $\supp u$ is not necessarily contained in $D_s$:
\begin{lemma}
	Suppose $u\in C^4(\mathbb{R}^2)$. The random variables $(L^s_m,u)$ converge in law to a normal variable of mean 0 and variance
	\begin{equation}\label{complicated}
		s^2\int_{D_{s}}|\psi_{s}|^2(1-\sigma_{s})+2\int_0^{s}ds'\int_0^{s'}ds''\int_{D_{s'}}\psi_{s'}\psi_{s''}(1-\sigma_{s''})-2s\int_0^{s}ds'\int_{D_{s'}}\psi_{s}\psi_{s'}(1-\sigma_{s'}),
	\end{equation}
	where $\psi_t$ solves the Laplace problem on $D_t$ with boundary values $\psi|_{\partial D_t}\equiv u|_{\partial D_t}$.
\end{lemma}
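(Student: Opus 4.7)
The plan is to reduce $(L^s_m,u)$ to a single martingale-difference array and then apply the martingale central limit theorem, just as in Step~2 of the proof of Theorem~\ref{fixedtime}, but now using a two-parameter family of harmonic extensions. The starting point is a summation by parts: with $N=\lfloor m^2 s\rfloor$ and $f_n := \vc{1}_{A_m(n)}-\nu_{m,n}$, Abel's identity gives $L^s_m = (N/m)f_N - m^{-1}\sum_{n=1}^{N-1}f_n$, so pairing with $u$ yields
\[(L^s_m,u) = \frac{N}{m^2}(E^{N/m^2}_m,u) - \frac{1}{m^2}\sum_{n=1}^{N-1}(E^{n/m^2}_m,u).\]
On the good event $\Eps$ of Lemmas~\ref{narrow} and \ref{narrowsand}, Step~1 of the preceding proof replaces each $(E^{s'}_m,u)$ by the martingale $m^{-1}\sum_{i=1}^{\lfloor m^2 s'\rfloor}\Delta M_i^{s'}$, where $\Delta M_i^{s'}:=\psi_{s',(m)}(z_i')-\psi_{s',(m)}(z_{m,i})$, $z_i'$ is the exit point of the $i$-th walk, and $\psi_{s',(m)}$ is the grid-harmonic extension of $u|_{\partial(D_{s'})^{2\eps_m}}$. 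The per-time error $O(m^{-1/5})$ is uniform in $s'$, so it remains $o_{\mathbb{P}}(1)$ after being averaged over $s'$. Swapping the $i$- and $n$-sums and approximating the Riemann sum by a true integral (justified by the Lipschitz dependence of $\psi_{s',(m)}$ on $s'$ for a smooth flow) gives
\[(L^s_m,u) = \frac{1}{m}\sum_{i=1}^N W_{m,i} + o_{\mathbb{P}}(1),\qquad W_{m,i} := s\,\Delta M_i^s - \int_{i/m^2}^s \Delta M_i^{s'}\,ds'.\]

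Conditional on $\mathscr{F}_{m,i-1}$, the $i$-th walk starts at the deterministic point $z_{m,i}\in D_0\subset A_m(i-1)$ and exits $A_m(i-1)\subset (D_{s'})^{2\eps_m}$ for every $s'\ge(i-1)/m^2$, so optional stopping yields $\mathbb{E}[\Delta M_i^{s'}\mid\mathscr{F}_{m,i-1}]=0$ for all $s'\in[i/m^2,s]$; by Fubini, $\mathbb{E}[W_{m,i}\mid\mathscr{F}_{m,i-1}]=0$ and $\{W_{m,i}/m\}_{i\le N}$ is a martingale-difference array. The maximum principle gives $\|\psi_{s',(m)}\|_\infty\le\|u\|_\infty$ uniformly in $s'$ and $m$, so $|W_{m,i}|$ is uniformly bounded; this immediately yields conditions (1) and (2) of \cite[Theorem~3.2]{HALL198051} by exactly the argument of Step~2 of the preceding proof.

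The only substantive task is the convergence of the quadratic variation $m^{-2}\sum_i \mathbb{E}[W_{m,i}^2\mid\mathscr{F}_{m,i-1}]$. Expanding
\[W_{m,i}^2 = s^2(\Delta M_i^s)^2 - 2s\int_{i/m^2}^s \Delta M_i^s\,\Delta M_i^{s'}\,ds' + 2\int_{i/m^2}^s\! ds'\!\int_{i/m^2}^{s'}\! ds''\,\Delta M_i^{s'}\Delta M_i^{s''},\]
each piece reduces to the two-parameter cross-correlation
\[A_i^{s',s''}:=\mathbb{E}[\Delta M_i^{s'}\Delta M_i^{s''}\mid\mathscr{F}_{m,i-1}] = \mathbb{E}[\psi_{s',(m)}\psi_{s'',(m)}(z_i')\mid\mathscr{F}_{m,i-1}] - \psi_{s',(m)}(z_{m,i})\psi_{s'',(m)}(z_{m,i}).\]
This is the two-parameter analog of the $Z_m$-quantity in Step~2 of the previous proof: the corresponding $N_m$-style error martingale---built now from $\psi_{s',(m)}\psi_{s'',(m)}$ instead of $\psi_{(m)}^2$---again has $L^2$-norm $O(m^{-1})$ by the same telescoping identity, and Theorem~\ref{narrowsand} reduces the occupied set to $D_{\min(s',s'')}$ up to error $O(\eps_m)$, giving in probability
\[m^{-2}\!\!\sum_{i\le m^2\min(s',s'')}\!\!A_i^{s',s''}\;\longrightarrow\;\int_{D_{\min(s',s'')}}\!\!\psi_{s'}\psi_{s''}(1-\sigma_{\min(s',s'')}).\]
Swapping each $i$-sum with the corresponding $s'$- and $s''$-integrals and collecting the three pieces assembles exactly (\ref{complicated}). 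The main obstacle is precisely this interchange: one needs the two-parameter estimates to hold uniformly in $(s',s'')\in[0,s]^2$. Fortunately every constant appearing in Step~2 of Theorem~\ref{fixedtime} depends only on $\|u\|_{C^4}$ and on the flow-dependent constants of Lemmas~\ref{narrow} and \ref{narrowsand}, all independent of $s'$ and $s''$, so the uniformity is essentially automatic.
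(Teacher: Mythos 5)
Your approach is essentially the paper's: the Abel summation produces exactly the paper's martingale $M_m(t)$ --- the paper simply writes it out directly as a double sum, with the discrete index $\ell$ playing the role of your $s'$-integration variable --- and the subsequent application of the martingale CLT, the auxiliary error martingale $N_m$, and the passage from the occupied sets to $D_{\min(s',s'')}$ all track Step~2 of the paper's argument step for step. The one detail you elide is that $W_{m,i}$ as written is only a martingale difference \emph{on the good event} $\Eps$: off $\Eps$ the exit point $z_i'$ may leave $(D_{s'})^{2\eps_m}$, so $\psi_{s',(m)}(z_i')$ is undefined and the optional-stopping identity $\mathbb{E}[\Delta M_i^{s'}\mid\mathscr{F}_{m,i-1}]=0$ fails. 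The paper handles this by inserting the stopping times $\tau_\ell$ (first time $A_m$ exits $(D_{\ell/m^2})^{\eps_m}$) into the definition of the martingale so that the martingale property holds unconditionally; your version needs the same insertion before the CLT of \cite[Theorem~3.2]{HALL198051} is literally applicable. A secondary point: replacing the Riemann sum $m^{-2}\sum_{\ell}\Delta M_i^{\ell/m^2}$ by $\int_{i/m^2}^s\Delta M_i^{s'}\,ds'$ requires justifying the continuity of $s'\mapsto\psi^{s'}_{(m)}$, which is not quite automatic since the grid-harmonic extension changes discretely as lattice nodes cross $\partial(D_{s'})^{2\eps_m}$; the paper avoids this by keeping the $\ell$-sum and only passing to integrals at the very end, after replacing $\psi^{\ell/m^2}_{(m)}$ by the continuum $\psi^{\ell/m^2}_m$.
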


\begin{remark}
	In the case of interest, with $\supp u\subset D_s$, we have that $\psi_s\equiv 0$ and thus that the above variance becomes
	\[2\int_0^{s}ds'\int_0^{s'}ds''\int_{D_{s'}}\psi_{s'}\psi_{s''}(1-\sigma_{s''}).\]
\end{remark}

\begin{proof}[Step 1]\let\qed\relax
	We first want to replace $(L^s_m,u)$ with a suitable martingale. Let $\psi^t_{m}$ solve the Dirichlet problem for $u$ on $(D_t)^{2\eps_m}$, with $\eps_m=Cm^{-3/5}$. Let $\psi^t_{(m)}$ solve the corresponding grid-Laplace problem on $\mathcal{G}_m\cap (D_t)^{2\eps_m}$. As in the proof of Theorem \ref{fixedtime}, this means that
	\begin{equation}\label{discharm2}
		\|\psi^t_{(m)}-\psi^t_m\|_\infty\leq C_1/m^2,
	\end{equation}
	where $C_1\sim \|\nabla^4 u\|$. Now define the martingale
	\begin{align*}
	M_m(t)=sm^{-1}\sum_{j=1}^{t\wedge\tau^*}&\left(\psi_{(m)}^{s}(A_{m}(j)\setminus A_{m}(j-1))-\psi_{(m)}^{s}(z_{m,j})\right)\\
	&-m^{-3}\sum_{\ell=1}^{\lfloor m^2s\rfloor}\sum_{j=1}^{\ell\wedge \tau_{\ell}\wedge t}\left(\psi_{(m)}^{\ell/m^2}(A_{m}(j)\setminus A_{m}(j-1))-\psi_{(m)}^{\ell/m^2}(z_{m,j})\right),
	\end{align*}
	where $\tau_\ell$ is first time that $A_{m}(j)$ exits $(D_{\ell/m^2})^{\eps_m}$, and $\tau^*:=\tau_{m^2s}$ is the first time that it exits $(D_{s})^{\eps_m}$.
	
	Consider the event $\Eps$, in which $\frac{1}{m}\mathbb{Z}^2\cap (D_{\ell/m^2})_{\eps_m}\subset A_{m}(\ell)\subset (D_{\ell/m^2})^{\eps_m}$ for all $\ell\leq m^2s$. By Lemma \ref{narrow}, this occurs with probability $1-e^{-m^{2/5}}\upto 1$. On this event, $\tau_\ell\geq\ell$ for all $\ell$, and
	\begin{align*}
	M_m(m^2s)&=sm^{-1}\sum\nolimits_{\frac{1}{m}\mathbb{Z}^2}\psi_{(m)}^{s}\cdot(\mathbf{1}_{A_{m}(m^2s)}-\sigma_{m,m^2s})-m^{-3}\sum_{\ell=1}^{\lfloor m^2s\rfloor}\sum\nolimits_{\frac{1}{m}\mathbb{Z}^2 }\psi_{(m)}^{\ell/m^2}\cdot(\mathbf{1}_{A_{m}(\ell)}-\sigma_{m,\ell})\\
	&=sm^{-1}\sum\nolimits_{\frac{1}{m}\mathbb{Z}^d}\psi_{(m)}^{s}\cdot(\mathbf{1}_{A_{m}(m^2s)}-\nu_{m,m^2s})-m^{-3}\sum_{\ell=1}^{\lfloor m^2s\rfloor}\sum\nolimits_{\frac{1}{m}\mathbb{Z}^2}\psi_{(m)}^{\ell/m^2}\cdot(\mathbf{1}_{A_{m,\ell}}-\nu_{m,\ell}).
	\end{align*}
	Of course, on event $\Eps$, the function $\mathbf{1}_{A_{m}(\ell)}-\nu_{m,\ell}$ is supported on $F_{m}^{\ell/m^2}=(D_{\ell/m^2})^{\eps_m}\setminus(D_{\ell/m^2})_{\eps_m}$; this set has volume $O(\eps_m)$, and $\sup_{F_{m}^{\ell/m^2}}\left|u-\psi^{\ell/m^2}_{(m)}\right|=O(\eps_m)$ as in the previous proof. Then we have
	\begin{align}\label{justusedinconclusion2}
	\begin{split}
		M_m(m^2s)&=sm^{-1}\sum_{\frac{1}{m}\mathbb{Z}^2}u\cdot(\mathbf{1}_{A_{m}(m^2s)}-\nu_{m,m^2s})-m^{-3}\sum_{\ell=1}^{\lfloor m^2s\rfloor}\sum_{\frac{1}{m}\mathbb{Z}^2}u\cdot(\mathbf{1}_{A_{m}(\ell)}-\nu_{m,\ell})+O(m\eps_m^2)\\
		&=m^{-3}\sum_{\ell=1}^{\lfloor m^2s\rfloor}\sum_{\frac{1}{m}\mathbb{Z}^2}\ell u\cdot(\mathbf{1}_{A_{m}(\ell)}- \mathbf{1}_{A_{m}(\ell-1)})-m^{-3}\sum_{\ell=1}^{\lfloor m^2s\rfloor}\sum_{\frac{1}{m}\mathbb{Z}^2}\ell u\cdot(\nu_{m,\ell}-\nu_{m,\ell-1})+O(m^{-1/5})\\
		&=(L^s_m,u)+O(m^{-1/5}).
	\end{split}
	\end{align}
	Thus, $M_m(m^2s)-(L^s_m,u)$ converges to zero in probability.
\end{proof}
\begin{proof}[Step 2]
	Note that the martingale intervals $X_{m,t}=M_m(t)-M_m(t-1)$ take the following form:
	\begin{align*}
		X_{m,t}=sm^{-1}\mathbf{1}_{\{t\leq\tau^*\}}\cdot&\left(\psi_{(m)}^{s}(A_{m}(t)\setminus A_{m}(t-1))-\psi_{(m)}^{s}(z_{m,t})\right)\\
		&-m^{-3}\sum_{\substack{t\leq\ell\leq m^2s\\\text{s.t.}\; \tau_\ell\geq t}}\left(\psi_{(m)}^{\ell/m^2}(A_{m}(t)\setminus A_{m}(t-1))-\psi_{(m)}^{\ell/m^2}(z_{m,t})\right).
	\end{align*}
	Now we need to show that $M_m(m^2s)$ approaches the appropriate normal distribution. We will again make use of the martingale central limit theorem \cite[Theorem 3.2]{HALL198051}---namely, our result is proved if we can show the following three conditions:
	\begin{enumerate}
		\item $\mathbb{E}\left[\max\nolimits_t |X_{m,t}|^2\right]$ is bounded in $m$.
		\item $\max\nolimits_t|X_{m,t}|\to 0$ in probability as $m\to\infty$.
		\item $\sum\nolimits_t |X_{m,t}|^2$ converges to the expression in (\ref{complicated}) in probability as $m\to\infty$.
	\end{enumerate}
	The first and second conditions follow from the following calculation, that $\mathbb{E}[\max_t|X_{m,t}|^a]\to 0$ for $a\geq 1$.
	\begin{align*}
	|X_{m,t}|^a&\leq 2^{a-1}s^am^{-a}\left|\psi_{(m)}^{s}(A_{m}(t)\setminus A_{m}(t-1))-\psi_{(m)}^{s}(z_{m,t})\right|^a\\
	&\qquad+2^{a-1}s^am^{-a}\sup_{\ell\geq t}\left|\psi_{(m)}^{\ell/m^2}(A_{m}(t)\setminus A_{m}(t-1))-\psi_{(m)}^{\ell/m^2}(z_{m,t})\right|^a\\
	&\leq 2^{a+1}s^am^{-a}\sup\left|\psi_{(m)}\right|^a\\
	&=O(m^{-a}),
	\end{align*}
	which proves the first two conditions.
	For the final condition, we again define auxiliary variables
	\begin{align*}
	Z_m(t)&= s^2m^{-2}\sum\nolimits_{\frac{1}{m}\mathbb{Z}^2}|\psi^{s}_{(m)}|^2(\mathbf{1}_{A_{m}(t\wedge\tau^*)}-\sigma_{m,t\wedge\tau^*})\\
	&\qquad+m^{-6}\sum_{1\leq j, k\leq m^2s}\sum\nolimits_{\frac{1}{m}\mathbb{Z}^2}\psi^{k/m^2}_{(m)}\psi^{j/m^2}_{(m)}(\mathbf{1}_{A_{m}(j\wedge\tau_j\wedge k\wedge\tau_k\wedge t)}-\sigma_{m,j\wedge\tau_j\wedge k\wedge\tau_k\wedge t})\\
	&\qquad-2sm^{-4}\sum_{1\leq j\leq m^2s}\sum\nolimits_{\frac{1}{m}\mathbb{Z}^2}\psi^{s}_{(m)}\psi^{j/m^2}_{(m)}(\mathbf{1}_{A_{m}(j\wedge\tau_j\wedge t)}-\sigma_{m,j\wedge\tau_j\wedge t}),
	\end{align*}
	\[S_m(t)=\sum_{j=1}^t|X_{m,j}|^2,\qquad N_m(t)=S_m(t)-Z_m(t).\]
	As before, $N_m$ satisfies the martingale property. To see this, we first factor the intervals of $Z_m$ as follows; below, write $a_{m,t}:=A_m(t)\setminus A_m(t-1)$ for the $t^{th}$ point joined to our IDLA.
	\begin{align*}
		Z_m(t)-Z_m(t-1)&=s^2m^{-2}\mathbf{1}_{\{\tau^*\geq t\}}\cdot\left(\psi^{s}_{(m)}(a_{m,t})^2-\psi^{s}_{(m)}(z_{m,t})^2\right)\\
		&\qquad+m^{-6}\sum_{\substack{t\leq j,k\leq m^2s\\\text{s.t.}\; \tau_j,\tau_k\geq t}}\left(\psi^{k/m^2}_{(m)}(a_{m,t})\psi^{j/m^2}_{(m)}(a_{m,t})-\psi^{k/m^2}_{(m)}(z_{m,t})\psi^{j/m^2}_{(m)}(z_{m,t})\right)\\
		&\qquad-2sm^{-4}\sum_{\substack{t\leq j\leq m^2s\\\text{s.t.}\; \tau_j\geq t}}\left(\psi^{s}_{(m)}(a_{m,t})\psi^{j/m^2}_{(m)}(a_{m,t})-\psi^{s}_{(m)}(z_{m,t})\psi^{j/m^2}_{(m)}(z_{m,t})\right)\\
		&=\bigg[sm^{-1}\mathbf{1}_{\{\tau^*\geq t\}}\cdot\psi^{s}_{(m)}(a_{m,t})-m^{-3}\sum_{\substack{t\leq j\leq m^2s\\\text{s.t.}\; \tau_j\geq t}}\psi^{j/m^2}_{(m)}(a_{m,t})\bigg]^2\\
		&\qquad-\bigg[sm^{-1}\mathbf{1}_{\{\tau^*\geq t\}}\cdot\psi^{s}_{(m)}(z_{m,t})-m^{-3}\sum_{\substack{t\leq j\leq m^2s\\\text{s.t.}\; \tau_j\geq t}}\psi^{j/m^2}_{(m)}(z_{m,t})\bigg]^2.
	\end{align*}
	We thus see that the only remaining terms of $N_m(t)-N_m(t-1)=|X_{m,t}|^2-(Z_m(t)-Z_m(t-1))$ are the following cross-terms, from which the martingale property follows:
	\begin{align*}
	\mathbb{E}[&N_m(t)-N_m(t-1)|\mathscr{F}_{m,t-1}]\\
	&=\mathbb{E}\bigg[|X_{m,i}|^2-(Z_m(t)-Z_m(t-1))\bigg|\mathscr{F}_{m,t-1}\bigg]\\
	&=\mathbb{E}\bigg[2\bigg[sm^{-1}\mathbf{1}_{\{\tau^*\geq t\}}\cdot\psi^{s}_{(m)}(z_{m,t})-m^{-3}\sum_{\substack{t\leq j\leq m^2s\\\text{s.t.}\; \tau_j\geq t}}\psi^{j/m^2}_{(m)}(z_{m,t})\bigg]^2\\
	&\qquad-2\bigg[sm^{-1}\mathbf{1}_{\{\tau^*\geq t\}}\cdot\psi^{s}_{(m)}(a_{m,t})-m^{-3}\sum_{\substack{t\leq j\leq m^2s\\\text{s.t.}\; \tau_j\geq t}}\psi^{j/m^2}_{(m)}(a_{m,t})\bigg]\\
	&\qquad\quad\times\bigg[sm^{-1}\mathbf{1}_{\{\tau^*\geq t\}}\cdot\psi^{s}_{(m)}(z_{m,t})-m^{-3}\sum_{\substack{t\leq j\leq m^2s\\\text{s.t.}\; \tau_j\geq t}}\psi^{j/m^2}_{(m)}(z_{m,t})\bigg]\;\bigg|\mathscr{F}_{m,t-1}\bigg]\\
	&\propto\mathbb{E}\bigg[\bigg[sm^{-1}\mathbf{1}_{\{\tau^*\geq t\}}\cdot\psi^{s}_{(m)}(z_{m,t})-m^{-3}\sum_{\substack{t\leq j\leq m^2s\\\text{s.t.}\; \tau_j\geq t}}\psi^{j/m^2}_{(m)}(z_{m,t})\bigg]\\
	&\qquad-\bigg[sm^{-1}\mathbf{1}_{\{\tau^*\geq t\}}\cdot\psi^{s}_{(m)}(a_{m,t})-m^{-3}\sum_{\substack{t\leq j\leq m^2s\\\text{s.t.}\; \tau_j\geq t}}\psi^{j/m^2}_{(m)}(a_{m,t})\bigg]\;\bigg|\mathscr{F}_{m,t-1}\bigg]\\
	&=0,
	\end{align*}
	using the fact that the last variable is a linear combination of martingale intervals adapted to $\mathscr{F}_{m,t}$. 
	As in the proof of Theorem \ref{fixedtime}, we can use this martingale property to show that---since $\left(S_m(t)-S_m(t-1)\right)^2$ and $\left(Z_m(t)-Z_m(t-1)\right)^2$ are of order $m^{-4}$---we have $\mathbb{E}[N_m(m^2s)^2]=O(m^{-2})$ and thus know that $N_m(m^2s)\to 0$ in probability.
	
	Finally, on event $\Eps$, we estimate $Z_m(m^2s)$ as follows:
	\begin{align*}
	Z_m(m^2s)&= s^2m^{-2}\sum\nolimits_{\frac{1}{m}\mathbb{Z}^d}|\psi^{s}_{(m)}|^2(\mathbf{1}_{A_{m}(m^2s)}-\sigma_{m,m^2s})\\
	&\qquad+m^{-6}\sum_{1\leq j, k\leq m^2s}\sum\nolimits_{\frac{1}{m}\mathbb{Z}^2}\psi^{k/m^2}_{(m)}\psi^{j/m^2}_{(m)}(\mathbf{1}_{A_{m}(j\wedge k)}-\sigma_{m,j\wedge k})\\
	&\qquad-2sm^{-4}\sum_{1\leq j\leq m^2s}\sum\nolimits_{\frac{1}{m}\mathbb{Z}^2}\psi^{s}_{(m)}\psi^{j/m^d}_{(m)}(\mathbf{1}_{A_{m}(j)}-\sigma_{m,j})\\
	&= s^2m^{-2}\sum\nolimits_{\frac{1}{m}\mathbb{Z}^2}|\psi^{s}_{(m)}|^2(\mathbf{1}_{A_{m}(m^2s)}-\sigma_{s})\\
	&\qquad+2m^{-6}\sum_{1\leq j\leq k\leq m^2s}\sum\nolimits_{\frac{1}{m}\mathbb{Z}^2}\psi^{k/m^2}_{(m)}\psi^{j/m^2}_{(m)}(\mathbf{1}_{A_{m}(j)}-\sigma_{j/m^2})\\
	&\qquad-2sm^{-4}\sum_{1\leq j\leq m^2s}\sum\nolimits_{\frac{1}{m}\mathbb{Z}^2}\psi^{s}_{(m)}\psi^{j/m^2}_{(m)}(\mathbf{1}_{A_{m}(j)}-\sigma_{j/m^2})+O(m^{-2}).
	\end{align*}
	Now, from (\ref{discharm2}), we can continue with the substitutions $\psi^\tau_{(m)}\to\psi^\tau_{m}$:
	\begin{align*}
	Z_m(m^2s)&= s^2m^{-2}\sum\nolimits_{\frac{1}{m}\mathbb{Z}^2}|\psi^{s}_{m}|^2(\mathbf{1}_{A_{m}(m^2s)}-\sigma_{s})\\
	&\qquad+2m^{-6}\sum_{1\leq j\leq k\leq m^2s}\sum\nolimits_{\frac{1}{m}\mathbb{Z}^2}\psi^{k/m^2}_{m}\psi^{j/m^2}_{m}(\mathbf{1}_{A_{m}(j)}-\sigma_{j/m^2})\\
	&\qquad-2sm^{-4}\sum_{1\leq j\leq m^2s}\sum\nolimits_{\frac{1}{m}\mathbb{Z}^2}\psi^{s}_{m}\psi^{j/m^2}_{m}(\mathbf{1}_{A_{m}(j)}-\sigma_{j/m^2})+O(m^{-2}).
	\end{align*}
	On event $\Eps$, the sets $A_m(j)$ and $D_{j/m^2}$ differ by at most $O(m^2\eps_m)$ points on the lattice $\frac{1}{m}\mathbb{Z}^2$, so we can replace $\mathbf{1}_{A_m(j)}\to\mathbf{1}_{D_{j/m^2}}$ with only an additional $O(\eps_m)$ error:
	\begin{align*}
	Z_m(m^2s)&= s^2m^{-2}\sum\nolimits_{\frac{1}{m}\mathbb{Z}^2}|\psi^{s}_{m}|^2(\mathbf{1}_{D_{s}}-\sigma_{s})\\
	&\qquad+2m^{-6}\sum_{1\leq j\leq k\leq m^2s}\sum\nolimits_{\frac{1}{m}\mathbb{Z}^2}\psi^{k/m^2}_{m}\psi^{j/m^2}_{m}(\mathbf{1}_{D_{j/m^2}}-\sigma_{j/m^2})\\
	&\qquad-2sm^{-4}\sum_{1\leq j\leq m^2s}\sum\nolimits_{\frac{1}{m}\mathbb{Z}^2}\psi^{s}_{m}\psi^{j/m^2}_{m}(\mathbf{1}_{D_{j/m^2}}-\sigma_{j/m^2})+O(\eps_m).
	\end{align*}
	Finally, since the derivatives of $\psi^\tau_m$ are uniformly bounded in both $m$ and $\tau$, we can swap these sums with the appropriate integrals with an error of $O(m^{-1})$ (which we wrap into the existing $O(\eps_m)$ term):
	\begin{align*}	
	Z_m(m^2s)&= s^2\int_{D_{s}}|\psi^{s}_{m}|^2(1-\sigma_{s})+2m^{-4}\sum_{1\leq j\leq k\leq m^2s}\int_{D_{j/m^2}}\psi^{k/m^2}_{m}\psi^{j/m^2}_{m}(1-\sigma_{j/m^2})\\
	&\qquad-2sm^{-2}\sum_{1\leq j\leq m^2s}\int_{D_{j/m^2}}\psi^{s}_{m}\psi^{j/m^2}_{m}(1-\sigma_{j/m^2})+O(\eps_m)\\
	&= s^2\int_{D_{s}}|\psi^{s}_{m}|^2(1-\sigma_{s})+2\int_0^{s}ds'\int_0^{s'}ds''\int_{D_{s''}}\psi^{s'}_{m}\psi^{s''}_{m}(1-\sigma_{s''})\\
	&\qquad-2s\int_0^{s}ds'\int_{D_{s'}}\psi^{s}_{m}\psi^{s'}_{m}(1-\sigma_{s'})+O(\eps_m)\\
	&= s^2\int_{D_{s}}|\psi_{s}|^2(1-\sigma_{s})+2\int_0^{s}ds'\int_0^{s'}ds''\int_{D_{s''}}\psi_{s'}\psi_{s''}(1-\sigma_{s''})\\
	&\qquad-2s\int_0^{s}ds'\int_{D_{s'}}\psi_{s}\psi_{s'}(1-\sigma_{s'})+O(\eps_m).
	\end{align*}
\end{proof}
\section{Point correlation functions}\label{correlations}
In this section, we will compute \emph{point-correlation} functions for extended-source IDLA. In short, we want to find a local version of Equation \ref{covariance2}, which would tell us the correlation between IDLA fluctuations at two specific points, $p,q\in \op{int}(D_T)\setminus D_0$. We phrase this problem in terms of limits of smooth bump functions, which we already know how to handle from our main results. Fix $\eps>0$, and let $\eta_p^\eps$ and $\eta_q^\eps$ be smooth functions satisfying
\begin{equation}\label{requirements}
	\supp\eta_p^\eps\subset B_\eps(p),\qquad \supp\eta_q^\eps\subset B_\eps(q),\qquad \int\eta_p^\eps=\int\eta_q^\eps=1.
\end{equation}
Without loss of generality, we will assume that $B_\eps(p),B_\eps(q)\subset D_T$, and we will write $L_m:=L^T_m$ for the lateness function at time $T$. From Theorem \ref{orderedtime}, we know that $(L_m,\eta_p^\eps)$ [resp., $(L_m,\eta_q^\eps)$] tends to a Gaussian variable $L(\eta_p^\eps)$ [resp., $L(\eta_q^\eps)$] in $m$.

Our primary result is the following:
\begin{theorem}\label{correlationtheorem}
	Suppose $p,q\in \op{int}(D_T)\setminus D_0$, and $s_p$ and $s_q$ satisfy $p\in\partial D_{s_p}, q\in\partial D_{s_q}$. For $\eps>0$, further suppose that $\eta_q^\eps$ and $\eta_p^\eps$ are smooth functions satisfying (\ref{requirements}). The covariance between $L(\eta_q^\eps)=\lim_{m\to\infty}(L_m,\eta_q^\eps)$ and $L(\eta_p^\eps)=\lim_{m\to\infty}(L_m,\eta_p^\eps)$ satisfies
	\[g(p,q):=\lim_{\eps\to 0}\mathbb{E}\left[L(\eta_q^\eps)L(\eta_p^\eps)\right]=\frac{1}{v_pv_q}\int_{D_{s_*}}F_pF_q(1-\sigma_{s_*}),\]
	where $s_*=\min(s_p,s_q)$, $v_p$ and $v_q$ are the velocities of the flow $s\mapsto D_s$ at $p$ and $q$ (at times $s_p$ and $s_q$, respectively), and $F_p$ and $F_q$ are the Poisson kernels of $D_{s_p}$ and $D_{s_q}$ at $p$ and $q$, respectively.
\end{theorem}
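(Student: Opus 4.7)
The plan is to apply the bilinear covariance formula (\ref{covariance2}) of Theorem \ref{orderedtime} with $u = \eta_p^\eps$, $v = \eta_q^\eps$, and then pass to the limit $\eps \to 0$. Writing $\psi_t^p, \varphi_t^q$ for the harmonic extensions to $D_t$ of $\eta_p^\eps|_{\partial D_t}, \eta_q^\eps|_{\partial D_t}$ respectively, we have
\[
\mathbb{E}[L(\eta_p^\eps) L(\eta_q^\eps)] = \int_0^T ds' \int_0^{s'} ds'' \int_{D_{s''}} (\psi_{s'}^p \varphi_{s''}^q + \psi_{s''}^p \varphi_{s'}^q)(1-\sigma_{s''}).
\]
Swapping $s' \leftrightarrow s''$ in the second summand symmetrizes this to
\[
\int_{[0,T]^2} ds'\, ds''\int_{D_{s_\wedge}} \psi_{s'}^p \varphi_{s''}^q\,(1-\sigma_{s_\wedge}), \qquad s_\wedge := \min(s', s'').
\]

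The key geometric observation is that $\eta_p^\eps|_{\partial D_{s'}}$ vanishes unless $\partial D_{s'}$ cuts through $B_\eps(p)$, which happens only for $s' \in [s_p - O(\eps), s_p + O(\eps)]$; analogously for $\varphi_{s''}^q$. To evaluate the limit, I would substitute the Poisson kernel representations
\[
\psi_{s'}^p(x) = \int_{\partial D_{s'}} K_{s'}(x,y)\,\eta_p^\eps(y)\,d\sigma(y), \qquad \varphi_{s''}^q(x) = \int_{\partial D_{s''}} K_{s''}(x,z)\,\eta_q^\eps(z)\,d\sigma(z),
\]
and perform the change of variables $(s', y) \mapsto y$ on $\{y \in \partial D_{s'}\}$ (and analogously $(s'', z) \mapsto z$). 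Since $s \mapsto D_s$ is a smooth, strictly-growing flow, this is a diffeomorphism onto a neighborhood of $p$ (resp.\ $q$), with Jacobian $1/v(y)$ (resp.\ $1/v(z)$), where $v$ denotes the normal velocity. The covariance then becomes
\[
\int_{B_\eps(p)} \frac{\eta_p^\eps(y)\,dA(y)}{v(y)} \int_{B_\eps(q)} \frac{\eta_q^\eps(z)\,dA(z)}{v(z)} \int_{D_{s_\wedge(y,z)}} K_{s'(y)}(x,y)\,K_{s''(z)}(x,z)\,(1-\sigma_{s_\wedge}(x))\,dx.
\]
Using $\int \eta_p^\eps = \int \eta_q^\eps = 1$ and sending $\eps \to 0$, the mollifiers concentrate at $(p,q)$; then $v(y) \to v_p$, $v(z) \to v_q$, $s_\wedge(y,z) \to s_*$, and the Poisson kernels converge to $F_p$ and $F_q$, yielding the claimed formula for $g(p,q)$.

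The main obstacle is justifying this interchange of limit and integral, since $F_p, F_q$ are singular at $p, q$. When $s_p < s_q$, the point $q$ lies outside $D_{s_*} = D_{s_p}$ at positive distance from $\partial D_{s_p}$, so $F_q$ is bounded on $D_{s_p}$, and only the integrable boundary singularity of $F_p$ at $p$ matters; dominated convergence then applies after a uniform $L^1$-bound on $K_{s'(y)}(\cdot, y)$ for $y$ close to $p$. In the borderline case $s_p = s_q$ with $p \ne q$, both singularities lie on $\partial D_{s_*}$ at distinct points, and standard 2D Poisson-kernel estimates give $F_p F_q \in L^1(D_{s_*})$ so the same strategy applies. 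A subsidiary technicality is verifying that $y \mapsto s'(y)$ is smoothly defined on a neighborhood of $p$, which follows from the smoothness and strict monotonicity of the flow $s \mapsto D_s$ built into the concentrated mass distribution framework.
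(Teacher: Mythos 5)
Your proposal is correct and follows essentially the same route as the paper: (i) use the bilinear covariance formula from Theorem~\ref{orderedtime}; (ii) observe that the mollifier supports restrict the time integrals to $O(\eps)$-bands around $s_p$ and $s_q$; (iii) insert the Poisson kernel representation on each level curve; (iv) change variables from $(s,\text{arclength on }\partial D_s)$ to Cartesian coordinates near $p$ and near $q$, with Jacobian $1/v$; (v) let $\eps\to 0$ so the mollifiers concentrate. Your one genuine improvement is organizational: by symmetrizing to $\int_{[0,T]^2} ds'\,ds'' \int_{D_{s_\wedge}}\psi^p_{s'}\varphi^q_{s''}(1-\sigma_{s_\wedge})$ before localizing, you treat the $s_p\neq s_q$ and $s_p=s_q$ cases uniformly, whereas the paper handles them separately (discarding the vanishing cross-term when $s_p\neq s_q$, then doing the symmetrization only in the $s_p=s_q$ case). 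The one place you are thinner than the paper is the interchange of limit and integral at the end: the paper makes this quantitative, using the kernel comparison $|F_{(s,\theta)}(z)-F_p(z)|\le \eps C |z-p|^{-2}$ together with a split at radius $\eps^{1/3}$ to obtain an explicit $O(\eps^{1/3})$ rate, while you appeal to a uniform $L^1$-bound plus dominated convergence---this is in the right spirit, but you would need to state and prove that uniform bound to close the argument, and the paper's gradient estimate on Green's functions is essentially the tool required.
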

For completeness' sake, we first recall the notion of a \emph{Poisson kernel}:
\begin{definition}
	Suppose $D$ is a smoothly bounded domain, and $p\in\partial D$. Then the Poisson kernel $F_{p,D}$ of $D$ at $p$ is the harmonic function on $\op{int}(D)$ satisfying 
	\begin{equation}\label{derivthing}
		F_p(x)=\pdrv{}{\mathbf{n}}G_D(x,\xi)|_{\xi=p},
	\end{equation}
	where $G_D(x,y)$ is the continuous Green's function for the domain $D$, and $\partial/\partial\mathbf{n}$ is the inward normal derivative with respect to the second variable.
	
	Importantly, if $f:C^0(\partial D)$, then the function
	\begin{equation}\label{poissonkerneleq}
		\phi(x)=\int_{\partial D}d\xi\;F_\xi(x)
	\end{equation}
	is harmonic on $D$ and satisfies $\phi|_{\partial D}\equiv f$.
	
	We will use these functions in the following context. If $p\in \op{int}(D_T)\setminus D_0$, there is a unique $s_p>0$ such that $p\in\partial D_{s_p}$. We write $F_p$ for the Poisson kernel of $D_{s_p}$ at $p$.
\end{definition}
\begin{proof}[Proof of Theorem \ref{correlationtheorem}]
	We first deal with the case that $s_p\neq s_q$, so that $p$ and $q$ are hit at different times by the flow $s\mapsto D_s$. Without loss of generality, suppose $s_p> s_q$, and suppose $\eps$ is small enough that 
	\begin{equation}\label{inequality}
		\inf\{s\;|\;B_\eps(p)\cap\partial D_s\neq\emptyset\}>\sup\{s\;|\;B_\eps(q)\cap\partial D_s\neq\emptyset\}.
	\end{equation}
	From (\ref{covariance2}), we know that
	\[\mathbb{E}\left[L(\eta_q^\eps)L(\eta_p^\eps)\right]=\int_0^{T}ds'\int_0^{s'}ds''\int_{D_{s''}}\psi^\eps_{s'}\varphi^\eps_{s''}(1-\sigma_{s''}),\]
	where $\psi^\eps_{s}$ and $\varphi^\eps_s$ are harmonic functions on $D_s$ satisfying $\psi^\eps_{s}|\partial D_s\equiv\eta_p^\eps$ and $\varphi^\eps_{s}|\partial D_s\equiv\eta_q^\eps$. In the above formula, we removed the $\psi^\eps_{s''}\varphi^\eps_{s'}$ term that appears in (\ref{covariance2}); these terms must all vanish, from (\ref{inequality}). Next, note that the remaining terms can only be nonzero when $s'$ [resp., $s''$] lies in a thin (i.e., $O(\eps)$) band around $s_p$ [resp, $s_q$]. Define
	\[s_-:=\inf\{s\;|\;B_\eps(q)\cap\partial D_s\neq\emptyset\}\]
	to be the smallest value of $s''$ such that $\varphi^\eps_{s'}$ is nonzero. From our above discussion, we can write
	\[\mathbb{E}\left[L(\eta_q^\eps)L(\eta_p^\eps)\right]=\int_0^{T}ds'\int_0^{s'}ds''\int_{D_{s_-}}\psi^\eps_{s'}\varphi^\eps_{s''}(1-\sigma_{s_-}) + O(\eps),\]
	also using the continuity of $s\mapsto\int\sigma_s$.
	Note that the third integral is now always taken over the same set. Now, introduce coordinates $(s,\theta)$ near $p$ such that $(s,\cdot)\in D_s$ and such that $\theta|\partial D_{s}$ measures the (signed) arclength from $(s,0)$ along $\partial D_s$. Introduce similar coordinates $(s,\alpha)$ near $q$. Without loss of generality, we assume $p=(s_p,0)$. From (\ref{poissonkerneleq}), we can rewrite
	\[\psi_s^\eps(z)=\int d\theta\; \eta_p^\eps(s,\theta)F_{(s,\theta)}(z),\qquad \varphi_s^\eps(z)=\int d\alpha\; \eta_q^\eps(s,\alpha)F_{(s,\alpha)}(z),\]
	which gives the following formula for the covariance:
	\[\mathbb{E}\left[L(\eta_q^\eps)L(\eta_p^\eps)\right]=\int_0^{T}ds'\int d\theta\;\eta_p^\eps(s',\theta)\int_0^{s'}ds''\int d\alpha\;\eta_q^\eps(s'',\alpha)\int_{D_{s_-}}F_{(s',\theta)}F_{(s'',\alpha)}(1-\sigma_{s_-}) + O(\eps).\]
	Now, $\eta_p^\eps$ is supported on an $\eps$-ball around $p$, so we only have to consider $F_{(s,\theta)}$ for $(s,\theta)\in B_\eps(p)$. For these points, we find that\footnote{For instance, we can find this estimate by first comparing $F_{(s,\theta)}$ and $F_p$ to nearby Green's functions using (\ref{derivthing}), and then comparing the Green's functions to one another by bounding their gradients above as $|\nabla_x G_D(x,y)|\leq C|x-y|^{-1}$.} $|F_{(s,\theta)}(z)-F_{p}(z)|\leq\frac{\eps C}{|z-p|^2}$, and thus
	\begin{align*}
		\int_{D_{s_p}}|F_{(s,\theta)}-F_{p}|=\int_{B_{\eps^{1/3}}(p)\cap D_{s_p}}&|F_{(s,\theta)}-F_{p}|+\int_{D_{s_p}\setminus B_{\eps^{1/3}}(p)}|F_{(s,\theta)}-F_{p}|\\
		&\leq\int_{D_{s_p}\setminus (D_{s_p})_{\eps^{1/3}}}\left(|F_{(s,\theta)}|+|F_{p}|\right) + \eps^{1/3} C\op{vol}(D_{s_p})=O(\eps^{1/3}).
	\end{align*}
	Repeating the same argument for $F_{(s,\alpha)}$ and $F_q$ (and using the fact that $F_p$ and $F_q$ are bounded near the pole of the other), we find
	\begin{align*}
		\mathbb{E}\left[L(\eta_q^\eps)L(\eta_p^\eps)\right]=\int_0^{T}ds'\int d\theta\;\eta_p^\eps(s',\theta)\int_0^{s'}ds''\int d\alpha\;\eta_q^\eps(s'',\alpha)\int_{D_{s_-}}F_{p}F_{q}(1-\sigma_{s_-}) + O(\eps^{1/3}).
	\end{align*}
	Finally, we convert from the coordinates $(s,\theta)$ and $(s,\alpha)$ back to standard Euclidean coordinates. For this, note that $(s,\theta)$ and $(s,\alpha)$ are orthogonal coordinate systems, and that $\theta$ and $\alpha$ are unit-speed parametrized, by definition. Thus, the only contributions to $|d(s,\theta)/d(x,y)|$ and $|d(s,\alpha)/d(x,y)|$ are the scaling factors in the $s$-direction. These are exactly the (inverse) velocities $v(s,\theta)^{-1}$ and $v(s,\alpha)^{-1}$ of the flow $s\mapsto D_s$, and we find
	\[\int ds'\int d\theta\;\eta_p^\eps(s',\theta)=\int dA\;v^{-1}\eta_p^\eps=v_p^{-1}\int dA\;\eta_p^\eps+O(\eps)=v_p^{-1}+O(\eps),\]
	and similarly $\int ds''\int d\alpha\;\eta_q^\eps(s'',\alpha)=v_q^{-1}+O(\eps)$. Putting these ingredients together, we get
	\begin{align*}
		\mathbb{E}\left[L(\eta_q^\eps)L(\eta_p^\eps)\right]=\frac{1}{v_pv_q}\int_{D_{s_-}}F_{p}F_{q}(1-\sigma_{s_-}) + O(\eps^{1/3})=\frac{1}{v_pv_q}\int_{D_{s_*}}F_{p}F_{q}(1-\sigma_{s_*}) + O(\eps^{1/3}),
	\end{align*}
	wrapping the $O(\eps)$ error term from switching $s_-$ to $s_*$ into the existing $O(\eps^{1/3})$ error.
	
	Now, assume that $s_p=s_q$, and suppose $\eps$ is small enough that $B_\eps(p)$ and $B_\eps(q)$ are disjoint. Let
	\[s_-:=\inf\left\{s\;|\;\left(B_\eps(q)\cup B_\eps(p)\right)\cap\partial D_s\neq\emptyset\right\},\]
	so that, as before,
	\[\mathbb{E}\left[L(\eta_q^\eps)L(\eta_p^\eps)\right]=\int_0^{T}ds'\int_0^{s'}ds''\int_{D_{s_-}}(\psi^\eps_{s'}\varphi^\eps_{s''}+\psi^\eps_{s''}\varphi^\eps_{s'})(1-\sigma_{s_-}) + O(\eps).\]
	We can split these terms as follows:
	\begin{align*}
		\int_0^{T}ds'\int_0^{s'}ds''&\int_{D_{s_-}}(\psi^\eps_{s'}\varphi^\eps_{s''}+\psi^\eps_{s''}\varphi^\eps_{s'})(1-\sigma_{s_-})\\
		&=\int_0^{T}ds'\int_0^{s'}ds''\int_{D_{s_-}}\psi^\eps_{s'}\varphi^\eps_{s''}(1-\sigma_{s_-})+\int_0^{T}ds'\int_0^{s'}ds''\int_{D_{s_-}}\psi^\eps_{s''}\varphi^\eps_{s'}(1-\sigma_{s_-})\\
		&=\int_0^{T}ds'\int_0^{s'}ds''\int_{D_{s_-}}\psi^\eps_{s'}\varphi^\eps_{s''}(1-\sigma_{s_-})+\int_0^{T}ds''\int_{s''}^{T}ds'\int_{D_{s_-}}\psi^\eps_{s''}\varphi^\eps_{s'}(1-\sigma_{s_-})\\
		&=\int_0^{T}ds'\int_0^{T}ds''\int_{D_{s_-}}\psi^\eps_{s'}\varphi^\eps_{s''}(1-\sigma_{s_-}).
	\end{align*}
	At this point, we can follow the same logic as in the first case, and the theorem follows.
\end{proof}
We can apply this formula concretely to the case of a radially-expanding disk. Suppose that $D_0$ is the unit disk, and set $Q_0^{s}=B_{\sqrt{s/\pi}}$ for $s\in[0,1)$. In this setting, we can imagine our source as a collection of outwardly moving rings of radius $0\leq r<1$, as shown in Figure \ref{diskfig}. From symmetry considerations, it is clear that $D_s=B_{\sqrt{1+s/\pi}}$ are outwardly expanding disks. 
\begin{figure}[H]
	\centering
	\begin{tikzpicture}
		\node[anchor=south west,inner sep=0] (image) at (0,0) {\includegraphics[scale=.3]{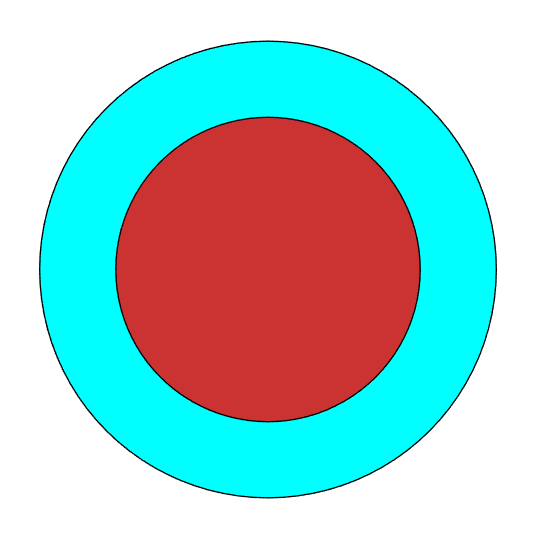}};
		\draw [->,
		line join=round,
		decorate, decoration={
			zigzag,
			segment length=15,
			amplitude=2,post=lineto,
			post length=2pt
		},ultra thick]  (4.7,2.2) -- (6.5,2.2);
		\node[anchor=south west,inner sep=0] (image) at (7,0) {\includegraphics[scale=.3]{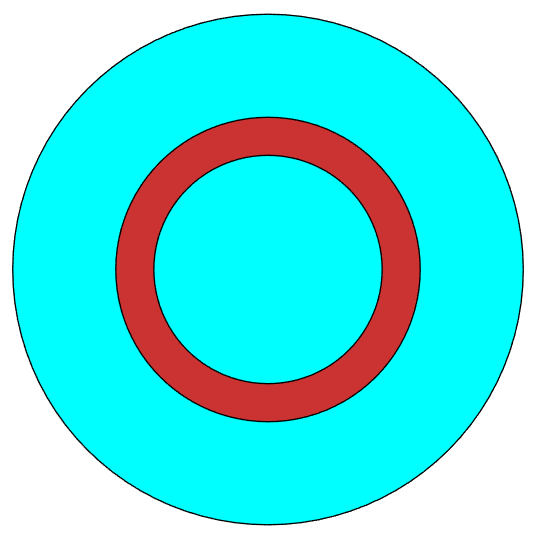}};
	\end{tikzpicture}%
	\caption{An illustration of the flow $s\mapsto D_s$ in the case of a radially-expanding disk. Here, $D_0$ is the unit disk (cyan, left), and our single source set $Q_0^T$ is a smaller disk within it (red, left). As time passes, source points move from the center of $Q_0^s$ to the outer boundary of $D_s$.}\label{diskfig}
\end{figure}
Suppose $p$ and $q$ lie in the plane, on origin-centered circles of radii $1<r_q\leq r_p<\sqrt{2}$ and at polar angles $\theta_p,\theta_q$. The functions $F_p$ and $F_q$ take the following forms:

\[F_p(re^{i\theta})=\sum_{n\in\mathbb{Z}}\left(r/r_p\right)^{|n|}e^{in(\theta-\theta_p)},\qquad F_q(re^{i\theta})=\sum_{n\in\mathbb{Z}}\left(r/r_q\right)^{|n|}e^{in(\theta-\theta_q)}.\]
Then, from Theorem \ref{correlationtheorem}, we can calculate
\begin{align*}
g(p,q)&=\frac{1}{v_pv_q}\int_{0}^{r_q}rdr\int_0^{2\pi}d\theta\;F_pF_q(1-\sigma_{\pi (r_q^2-1)})\\
&=(2\pi)^2r_pr_q\int_{0}^{r_q}dr\int_0^{2\pi}d\theta\;rF_pF_q(1-\sigma_{\pi (r_q^2-1)})\\
&=(2\pi)^2r_pr_q\int_{0}^{r_q}dr\int_0^{2\pi}d\theta\;r\sum_{j,k\in\mathbb{Z}}\frac{r^{|j|+|k|}}{r_p^{|j|}r_q^{|k|}}e^{i(j+k)\theta-ij\theta_p-ik\theta_q}(1-\sigma_{\pi (r_q^2-1)}).
\end{align*}
Only the terms with $j+k=0$ survive when integrating $\theta$:
\begin{align*}
g(p,q)&=(2\pi)^3r_pr_q\int_{0}^{r_q}dr\;\sum_{j\in\mathbb{Z}}\frac{r^{2|j|+1}}{r_p^{|j|}r_q^{|j|}}e^{ij(\theta_p-\theta_q)}(1-\sigma_{\pi (r_q^2-1)}).
\end{align*}
Now, we break this into two integrals using $\sigma_{\pi (r_q^2-1)}=\mathbf{1}_{\{r\leq\sqrt{r_q^2-1}\}}+\mathbf{1}_{\{r\leq1\}}$:
\begin{align*}
g(p,q)&=(2\pi)^3r_pr_q\int_{1}^{r_q}dr\;\sum_{j\in\mathbb{Z}}\frac{r^{2|j|+1}}{r_p^{|j|}r_q^{|j|}}e^{ij(\theta_p-\theta_q)}-(2\pi)^3r_pr_q\int_{0}^{\sqrt{r_q^2-1}}dr\;\sum_{j\in\mathbb{Z}}\frac{r^{2|j|+1}}{r_p^{|j|}r_q^{|j|}}e^{ij(\theta_p-\theta_q)}\\
&=(2\pi)^3r_pr_q\sum_{j\in\mathbb{Z}}\frac{e^{ij(\theta_p-\theta_q)}}{r_p^{|j|}r_q^{|j|}}\left(\int_{1}^{r_q}r^{2|j|+1}dr-\int_{0}^{\sqrt{r_q^2-1}}r^{2|j|+1}dr\right)\\
&=(2\pi)^3r_pr_q\sum_{j\in\mathbb{Z}}\frac{1}{2|j|+2}\frac{e^{ij(\theta_p-\theta_q)}}{r_p^{|j|}r_q^{|j|}}\left(r_q^{2|j|+2}-1-(r_q^2-1)^{|j|+1}\right).
\end{align*}
We can discard the negative frequency modes by rewriting this sum as twice the real part of its positive frequency modes:
\begin{align*}
g(p,q)&=(2\pi)^3(r_pr_q)^2\op{Re}\left(\sum_{j=0}^\infty\frac{1}{j+1}\frac{e^{ij(\theta_p-\theta_q)}}{r_p^{j+1}r_q^{j+1}}\left(r_q^{2j+2}-1-(r_q^2-1)^{j+1}\right)\right)\\
&=(2\pi)^3(r_pr_q)^2\op{Re}\left(e^{-i(\theta_p-\theta_q)}\sum_{j=0}^\infty\frac{1}{j+1}e^{i(j+1)(\theta_p-\theta_q)}\left(\frac{r_q^{j+1}}{r_p^{j+1}}-\frac{1}{r_p^{j+1}r_q^{j+1}}-\frac{(r_q^2-1)^{j+1}}{r_p^{j+1}r_q^{j+1}}\right)\right)\\
&=-(2\pi)^3(r_pr_q)^2\op{Re}\bigg[e^{-i(\theta_p-\theta_q)}\bigg(\op{Log}(1-e^{i(\theta_p-\theta_q)}r_q/r_p)-\op{Log}(1-e^{i(\theta_p-\theta_q)}1/r_pr_q)\\
&\hspace{3.3in}-\op{Log}(1-e^{i(\theta_p-\theta_q)}(r_q^2-1)/r_pr_q)\bigg)\bigg]\\
&=-(2\pi)^3|pq|\op{Re}\left[\ol{p}q\big(\op{Log}(1-\ol{q}/\ol{p})-\op{Log}(1-1/\ol{p}q)-\op{Log}(1-(|q|^2-1)/\ol{p}q)\big)\right],
\end{align*}
where $\op{Log}$ denotes the principle value of the logarithm, and we view $p$ and $q$ as complex numbers. This function is plotted in Figure \ref{pointcorrelationfig}.

\begin{figure}[H]
	\begin{subfigure}{.5\textwidth}
		\centering
		\includegraphics[scale=.3]{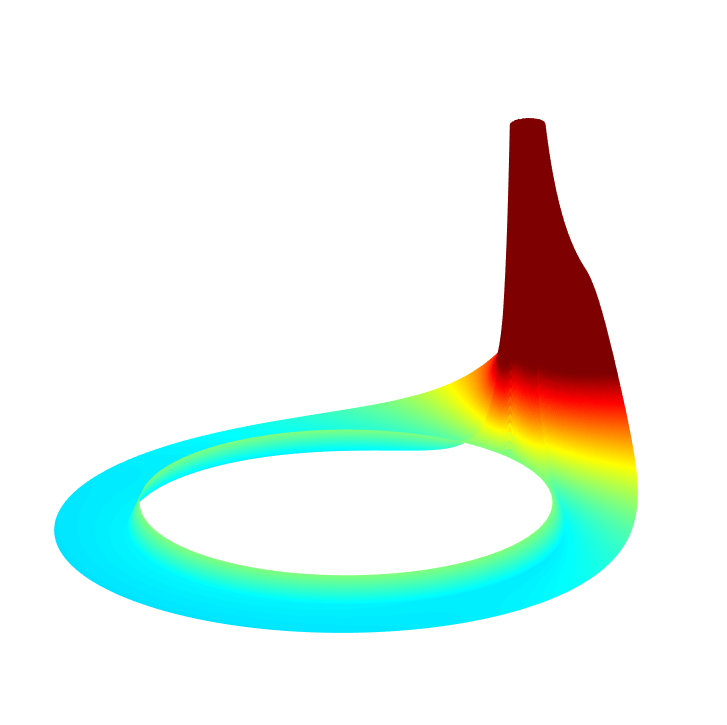}
	\end{subfigure}%
	\begin{subfigure}{.5\textwidth}
		\centering
		\includegraphics[scale=.3]{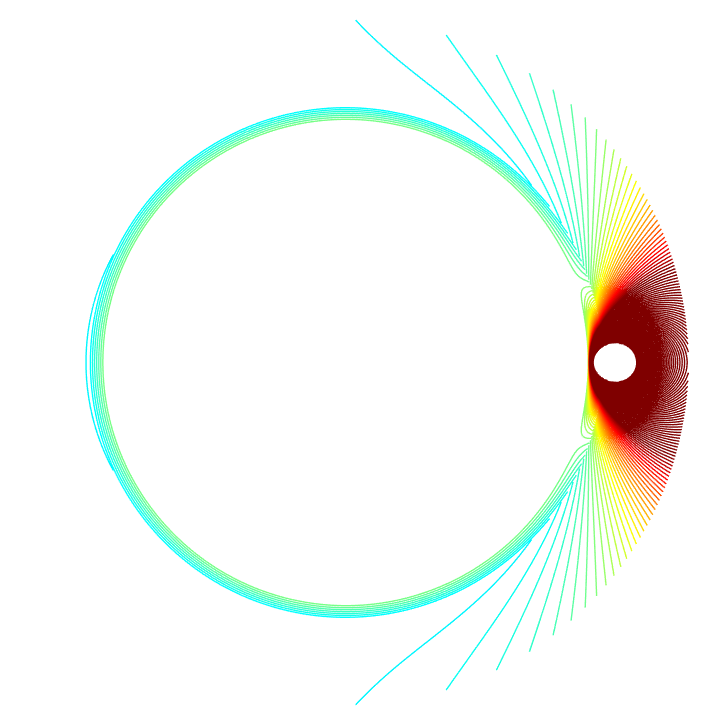}
	\end{subfigure}
	\caption{A plot and contour map of the function $g(p,q)$, where $q=(\nicefrac{1}{2}+\nicefrac{\sqrt{5}}{4},0)$ is fixed. Note the logarithmic singularity at $p=q$, and that the function vanishes for $p$ along the unit circle (the inner boundary of the domain).}\label{pointcorrelationfig}
\end{figure}
The function $g(p,q)$ has several key properties, which we can see in Figure \ref{pointcorrelationfig}. For one, $g(p,q)$ is positive if and only if $\theta_p$ and $\theta_q$ are nearby. This confirms the geometric intuition that, for instance, an early point leads to other nearby early points, but that it prevents distant early points (by using up particle mass itself). Secondly, $g(p,q)$ vanishes as either $p$ or $q$ approaches the unit disk, likely reflecting the fact that points nearer to $D_0$ are ``more deterministic''---i.e., that the variance of their lateness decreases to 0 as they get closer to $D_0$. It is easy to check from Theorem \ref{correlationtheorem} that this property holds true for other flows, as well, with the unit disk replaced by $D_0$ in general.

Next, note the logarithmic singularity present at $p=q$. This is to be expected, in analogy to the (free space) Green's function $G(x,y)=\log|x-y|$. Indeed, just as we can view the Green's function as giving an inner product 
\[(u,v)_{-1}:=\int dxdy\;u(x)G(x,y)v(y)=\int dy\;(\nabla^{-2}u)(y)v(y),\]
we can view the point-correlation function $g$ as the kernel of the inner product defined in Equation \ref{covariance2}:
\[(u,v)_g:=\int dxdy\;u(x)g(x,y)v(y)=\int_0^{T}ds'\int_0^{s'}ds''\int_{D_{s''}}(\psi_{s'}\varphi_{s''}+\psi_{s''}\varphi_{s'})(1-\sigma_{s''}),\]
where, as before, $\psi_s$ and $\varphi_s$ are the solutions of the Dirichlet problem on $D_s$ for $u$ and $v$, respectively.
\section{Directions for further research}

One interesting extension of this work would be to extend these results to higher dimensions. In dimension $d$, the appropriate scaling factor for the fluctuation functions $E^s_m$ and $L^s_m$ would be $m^{d/2}$ (just as it is $m=m^{2/2}$ here). In general, then, the error found in (\ref{justusedinconclusion}) and (\ref{justusedinconclusion2}) would come out to be $O(m^{d/2}\eps_m^2)$. For this to decrease, we then need the bound $\eps_m=o(m^{-d/4})$ on the maximum fluctuations. This is likely possible to achieve if $d=3$, but clearly impossible for $d\geq 4$.

However, there are weaker results that remain possible for $d\geq 4$. For one, if we require the test function $u$ to be harmonic, then we could achieve $\|u-\psi_{(m)}\|_\infty = O(m^{-2})$ on the domain of interest, rather than our existing $\|u-\psi_{(m)}\|_\infty = O(\eps_m)$. In this case, the requirement on $\eps_m$ becomes $\eps_m=o(m^{2-d/2})$, which now appears possible for dimensions 4 and 5.

Another important direction of research would be to generalize the sorts of possible sources for IDLA. For instance, it would be interesting to see if corresponding scaling limits hold if, instead of starting from a concentrated mass distribution, we were to start points evenly from a submanifold of $D_0$. Starting from the boundary of $D_0$, for example, may provide a good substitute for starting particles evenly across $D_0$ itself. In chemical applications, this adjusted setting could model a solid particle source of a particular shape.

Fortunately, the methods used in this paper translate fairly straightforwardly to other settings. The greatest obstacle to generalizing our results is finding an analogue to Lemma \ref{narrow}, which was the primary result of the preceding paper \cite{darrow2020convergence}. Indeed, if it could be shown that the fluctuations of IDLA from a particular source satisfy a similar $O(m^{-1/2-\eps})$ bound (for any $\eps>0$), the remainder of our argument could likely be repeated.

\renewcommand{\thesection}{A}
\section{Appendix: maximum fluctuations of the divisible sandpile}\label{appendix}
We will use the capital $N_m(t)$ to denote the fully occupied set
\[N_m(t):=\{\nu_{m,t}=1\}\subset\frac{1}{m}\mathbb{Z}^2.\]
We will also use the notation of \cite{darrow2020convergence}---in particular, for any $\zeta\in\frac{1}{m}\mathbb{Z}^2\setminus D_0$, we will write $\tau=\tau(\zeta)$ for the time at which $\zeta\in\partial D_\tau$, and we will use $H_\zeta$ and $\Omega_\zeta$ exactly as in that paper. We will not give more details on these objects here.

Now, we say that a point $z\in\frac{1}{m}\mathbb{Z}^2$ is $\eps$-early at time $t$ if $z\in N_m(t)$, but $z\notin (D_{t/m^2})^\eps$. Similarly, $z$ is $\eps$-late at time $t$ if $z\in (D_{t/m^2})_\eps$, but $z\notin N_m(t)$.

Finally, we will define a stopped version of $\nu_{m,t}$, as follows:

\begin{definition}[Stopped Sandpile]
	Given $\nu_{\zeta,n}$, define the intermediate function $\nu_{\zeta,n}^0=\nu_{\zeta,n}+\mathbf{1}_{\{z_{m,n+1}\}}$. At each time step $t$, choose a point $z=z(t)\in\supp\nu_{\zeta,n}^t\setminus\partial D_\tau$ such that $\nu_{\zeta,n}^t(z)>1$. Let $W_{m,n}^{t}(s)$ be a Brownian motion started from $z$ on the grid
	\[\mathcal{G}_m:=\left\{(x,y)\in\mathbb{R}^2\;\bigg|\;x\in\frac{1}{m}\mathbb{Z}^2\;\text{or}\;y\in\frac{1}{m}\mathbb{Z}^2\right\},\]
	as defined in Definition 4.3 of \cite{darrow2020convergence}. Define the stopping time
	\[\tau^*:=\inf\left\{s\;\bigg|\;W_{m,n}^{t}(s)\in\left(\frac{1}{m}\mathbb{Z}^2\setminus N_m(n)\right)\cup\partial D_\tau\right\},\]
	and set
	\[\nu_{\zeta,n}^{t+1}(z')=\nu_{\zeta,n}^{t}(z')+(\nu_{\zeta,n}^{t}(z)-1)\cdot\left(\mathbb{P}\left[W_{m,n}^{t}(\tau^*)=z\right]-\delta_{z,z'}\right).\]
	For a large enough $t'$, we have that $\nu_{\zeta,n}^{t'}(z)\leq1$ everywhere in $\supp\nu_{m,n}^{t'}\setminus\partial D_\tau$; then we define $\nu_{\zeta,n+1}=\nu_{\zeta,n}^{t'}$.
	
	In parallel with the original divisible sandpile model, we define $\nu_{\zeta,n+1}$ by taking the excess mass at $z$ in $\nu_{\zeta,n}$ and splitting it around the edge of $\supp\nu_{\zeta,n}$ according to a discrete harmonic measure. New in this case, however, is that we stop mass before it exits the domain $D_\tau$.
	
	Note that this satisfies the same key equality as the original harmonic measure; namely, for any grid harmonic (see \cite{darrow2020convergence}) $H$ defined in $\frac{1}{m}\mathbb{Z}^2\cap D_\tau$,
	\[\sum H\cdot\left(\nu_{\zeta,t}-\sigma_{\zeta,t}\right)=0.\]
\end{definition}

\begin{lemma}[Thin Tentacles]\label{tentaclessand}
	There is an absolute constant $b>0$ such that for all $z\in A_m(t)\subset\frac{1}{m}\mathbb{Z}^2$ with $d(z,D_0)\geq r$,
	\[\#\left(N_m(t)\cap B(z,r)\right)> bm^2r^2.\]
\end{lemma}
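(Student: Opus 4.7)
The plan is to adapt the classical Lawler--Bramson--Griffeath thin-tentacles argument (originally for IDLA) to the deterministic divisible sandpile, substituting the stopped-sandpile mass-conservation identity for random-walk hitting estimates. The geometric picture is the same as in the IDLA case: if $z$ were to sit at the tip of a thin tentacle of $N_m(t)$, then all the source mass reaching $z$ would have to funnel through a narrow corridor whose discrete harmonic measure as seen from $z$ is provably tiny, which conflicts with the fact that $z$ has been filled to capacity.

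Concretely, I would fix $z\in N_m(t)$ with $d(z,D_0)\geq r$; since each source set $Q_i^s\subset\subset\op{int}(D_0)$, no sources lie in $B(z,r)$, so every unit of mass currently sitting in $N_m(t)\cap B(z,r)$ must have been toppled in across $\partial B(z,r)$. Passing to the stopped sandpile $\nu_{z,t}$ from the appendix, with stopping boundary $\partial D_{\tau(z)}$, yields the identity $\sum H\cdot(\nu_{z,t}-\sigma_{z,t})=0$ for every $H$ grid-harmonic on $\frac{1}{m}\mathbb{Z}^2\cap D_{\tau(z)}$. The natural test function is $H=H_z$, the discrete Green's function of $\supp\nu_{z,t}$ with pole at $z$, built via the $H_\zeta$, $\Omega_\zeta$ framework of \cite{darrow2020convergence}; this $H_z$ is grid-harmonic off $z$, carries a logarithmic singularity at $z$, and is comparatively small on the bulk source region.

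Unfolding the identity with $H=H_z$, the singular contribution $H_z(z)\,\nu_{z,t}(z)$ is of order $\log(rm)$, while the remaining terms reduce to the $H_z$-weighted average of the source distribution $\sigma_{z,t}$. The comparison then boils down to a discrete Beurling-type estimate: if $\#(N_m(t)\cap B(z,r))<bm^2r^2$, then $\supp\nu_{z,t}$ is too thin inside $B(z,r)$ for $z$ to receive appreciable harmonic measure from the bulk, so $H_z$ at the sources decays as a positive power of $b$, and the source-side of the identity cannot balance the logarithmic spike at $z$. Choosing $b$ small enough then yields a contradiction.

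The main obstacle is the quantitative Beurling-type harmonic-measure estimate in the last step. On the lattice the boundary $\partial N_m(t)$ is at best Lipschitz, and the stopping surface $\partial D_{\tau(z)}$ interacts nontrivially with it, so the continuous Beurling projection bound cannot be applied verbatim. Fortunately, the $H_\zeta$ and $\Omega_\zeta$ apparatus in \cite{darrow2020convergence} appears to have been constructed precisely to control discrete harmonic measure in lattice domains with this kind of rough boundary, so I expect that most of the analytical weight of the argument is already available and the task reduces to invoking those estimates at the correct scale, rather than developing new discrete potential theory.
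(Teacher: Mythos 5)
Your proposal takes a genuinely different route from the paper, but it leaves the load-bearing step unproved. The paper does not attempt a direct potential-theoretic argument at all: it introduces a one-parameter family of intermediate stochastic processes $\nu_{m,t}^\ell$ in which each source particle is split into $\ell$ pieces of mass $1/\ell$, each performing an independent random walk until it finds room, so that $\ell=1$ recovers IDLA and $\ell\to\infty$ recovers the (deterministic) divisible sandpile in law. The Jerison--Levine--Sheffield thin-tentacles bound (Lemma~2 of \cite{10.2307/23072157}, Lemma~3.2 of \cite{darrow2020convergence}) is then re-run verbatim for each $\ell$ --- the only change is that the number of ``trials'' and ``failures'' in the second-moment/martingale argument scales by $\ell$ --- yielding a failure probability $C_0 e^{-c_0 mr}$ uniform in $\ell$. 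Because the limit process is deterministic, a uniform sub-one probability bound on the bad event forces the bad event to have probability zero in the limit, which is the desired pointwise statement. No Green's-function identity or Beurling estimate is used; the result is transported from the stochastic model to the deterministic one by a limiting argument.

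Your route --- testing the stopped-sandpile conservation identity $\sum H(\nu_{\zeta,t}-\sigma_{\zeta,t})=0$ against a discrete Green's function with pole at $z$ and balancing the logarithmic spike against a harmonic-measure estimate --- is an interesting attempt to prove the deterministic statement directly, and if it worked it would be more self-contained than the paper's reduction to the IDLA tentacles lemma. However, as written it has a genuine gap: the quantitative ``discrete Beurling-type estimate'' in your last paragraph, which you yourself flag as the main obstacle, is exactly the content of the lemma, and it is not supplied by the $H_\zeta$, $\Omega_\zeta$ machinery of \cite{darrow2020convergence}. That apparatus is built to compare $\nu_{\zeta,t}$ to its continuum limit near a fixed boundary point $\zeta$ of an expanding domain $D_\tau$, and controls earliness/lateness along $\partial D_\tau$; it does not give bounds on harmonic measure inside an arbitrary thin lattice region $N_m(t)\cap B(z,r)$ whose geometry is precisely what is unknown. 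Moreover the sign structure of the balance you want is delicate: in a thin tentacle both the mass delivered to $z$ and the capacity of the tentacle shrink together, and the heavy lifting in the original JLS argument is a second-moment concentration bound for random-walk visits, not a pure harmonic-measure comparison. You would need to reconstruct that estimate in a deterministic form, which is a substantial piece of new analysis that the proposal does not attempt.
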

\begin{proof}
	For this, we define the intermediate processes $\nu_{m,t}^\ell$, for each integer $\ell\geq 1$:
	\begin{enumerate}
		\item Define the initial set $\nu^\ell_m(0)=\nu_m(0)=\mathbf{1}_{\frac{1}{m}\mathbb{Z}^2\cap D_0}$.
		\item For each $i\in\frac{1}{\ell}\mathbb{Z}_{>0}$, start a random walk at $z_{m,\lceil i\rceil}$, and let $z'_i$ be the first point in the walk at which $\nu^\ell_{m,i-\ell^{-1}}(z'_i)< 1$. Let $\nu^\ell_{m,i}=\nu^\ell_{m,i-\ell^{-1}}+\ell^{-1}\mathbf{1}_{\{z_{i}'\}}$.
	\end{enumerate}
	Now, $\nu_{m,t}^0$ is simply an IDLA, by definition, and $\nu_{m,t}^\ell\to\nu_{m,t}$ in law (pointwise) in $\ell$. We can now lift the proof of Lemma 2 of \cite{10.2307/23072157} (Lemma 3.2 of \cite{darrow2020convergence}) verbatim,\footnote{The only significant difference in proving the new result is that the total number of ``trials'', as well as the total number of required ``failures'' (in the language of \cite{10.2307/23072157}), is scaled up by a factor of $\ell$.} to show that
	\[\mathbb{P}\left[\nu^\ell_{m,t}(z)>0, \#\left(\{\nu^\ell_{m,t}=1\}\cap B(z,r)\right)\leq bm^2r^2\right]\leq C_0e^{-c_0mr}\]
	for constants $c_0,C_0$ independent of $\ell$. In particular, this probability is uniformly bounded below 1 in $\ell$; since $\nu_{m,t}^\ell$ converges in law to the deterministic function $\nu_{m,t}$, we see that 
	\[\lim_{\ell\to\infty}\mathbb{P}\left[\nu^\ell_{m,t}(z)>0, \#\left(\{\nu^\ell_{m,t}=1\}\cap B(z,r)\right)\leq bm^2r^2\right]=0,\]
	from which the lemma follows.
\end{proof}
The next theorem is a restatement of Theorem \ref{narrowsand}, and a stronger version of Theorem 3.9 of \cite{Levine_2008}:
\begin{theorem}[Theorem \ref{narrowsand}]
	There is a constant $C>0$ dependent on the flow such that, for large enough $m$ and any $s\in[0,T]$,
	\[\frac{1}{m}\mathbb{Z}^d\cap(D_s)_{Cm^{-3/5}}\subset N_m(m^2s)\subset (D_s)^{Cm^{-3/5}}.\]
\end{theorem}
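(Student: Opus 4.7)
The plan is to mirror the proof of Lemma \ref{narrow} from \cite{darrow2020convergence} as closely as possible, substituting the deterministic divisible sandpile for the random IDLA. The key mechanism is to test the difference $\nu_{m,t}-\sigma_{m,t}$ against well-chosen grid-harmonic functions and exploit the identity
\[\sum_{z\in\frac{1}{m}\mathbb{Z}^2} H(z)\bigl(\nu_{\zeta,t}(z)-\sigma_{\zeta,t}(z)\bigr)=0\]
for the stopped sandpile, recorded after the definition of $\nu_{\zeta,n}$. Because the sandpile is deterministic, the probabilistic statements of \cite{darrow2020convergence} collapse to pointwise inequalities on mass, which should actually simplify the bookkeeping.

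First I would handle the outer inclusion by ruling out $\eps_m$-early points. Suppose for contradiction that some $z\in\supp\nu_{m,m^2 s}$ satisfies $d(z,D_s)\geq Cm^{-3/5}$, and let $\tau=\tau(z)>s$ be the time at which $z$ lies on $\partial D_\tau$. Plug the grid-harmonic test function $\Omega_\zeta$ of \cite{darrow2020convergence}---which concentrates near $\zeta$ and is controlled on $D_\tau$---into the identity applied to the stopped sandpile $\nu_{\zeta,m^2 s}$. The $\sigma_{\zeta,m^2 s}$ side is small, since at time $s$ no source mass has reached a neighborhood of $z$. On the other side, Lemma \ref{tentaclessand} (Thin Tentacles) forces the fully-occupied mass around $z$ to be at least $\gtrsim m^2\eps_m^2$, producing a lower bound on $\sum H\cdot\nu_{\zeta,m^2 s}$ that beats the upper bound on $\sum H\cdot\sigma_{\zeta,m^2 s}$ once $\eps_m\gg m^{-3/5}$, a contradiction.

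Next I would handle the inner inclusion by ruling out $\eps_m$-late points, by a dual argument. If $z\in(D_s)_{Cm^{-3/5}}$ failed to lie in $\supp\nu_{m,m^2 s}$, apply the identity to the test function $H_\zeta$ from \cite{darrow2020convergence}, concentrated at $\zeta$ and adapted to $D_\tau$ for $\tau=\tau(z)<s$: now the $\sigma_\zeta$-side is large while $\nu_\zeta$ exhibits a deficit near $z$, producing the opposite inequality. A union over a polynomial grid of sampled times $s\in[0,T]$, combined with the monotonicity of $t\mapsto\supp\nu_{m,t}$ and the smoothness of $s\mapsto D_s$, then propagates both inclusions to every $s$ simultaneously at the cost of enlarging $C$.

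The main obstacle is porting the test-function estimates for $\Omega_\zeta$ and $H_\zeta$ from \cite{darrow2020convergence}, where they were calibrated against simple random walks and the IDLA martingale, over to the deterministic mass-splitting implemented by the stopped sandpile. Because the transport of mass in $\nu_{\zeta,t}$ is governed by the harmonic measure of the stopped grid Brownian motion $W_{m,n}^{t}$ on $\mathcal{G}_m$, the same families of test functions should still do the job; what needs careful checking is that their $L^\infty$, gradient, and boundary-layer bounds convert into mass-balance inequalities at the correct scale $m^{-3/5}$. Once that bookkeeping is in place, the remaining ingredients---the Thin Tentacles lemma, the harmonic identity, and the time-union argument---are already supplied by the lemmas above.
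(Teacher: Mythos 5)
Your overall strategy---test $\nu_{m,t}-\sigma_{m,t}$ against grid-harmonic functions $H_\zeta$, use the stopped-sandpile identity in place of the IDLA martingale, and invoke Thin Tentacles for the early cluster---is the right one and matches the paper. But there is a structural gap in how you organize the two inclusions, and it is not cosmetic.

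You present the outer inclusion (no $\eps_m$-early points) and the inner inclusion (no $\eps_m$-late points) as two independent, ``dual'' applications of the identity $\sum H_\zeta(\nu_{\zeta,t}-\sigma_{m,t})=0$. In the early-point direction you claim that the Thin Tentacles lower bound on the occupied mass near $z$ produces a value of $\sum H_\zeta\cdot\nu_{\zeta,t}$ that ``beats'' $\sum H_\zeta\cdot\sigma_{m,t}$, yielding a direct contradiction. This does not follow: an early cluster near $\zeta$ adds a large positive amount to $\sum H_\zeta\nu_{\zeta,t}$ locally, but the identity constrains the \emph{global} sum to be zero, and that local surplus can be cancelled by a deficit of $\nu_{\zeta,t}$ elsewhere on $\partial D_\tau$---i.e., by late points---since $H_\zeta$ is nonnegative on the whole region. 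So the early-point argument cannot run by itself. The paper's Lemma~\ref{earlysand} is explicitly a conditional implication: \emph{if} there is an $a/m$-early point \emph{and} no point is $\ell/m$-late, then the sum $\sum H_\zeta(\nu_{\zeta,t}-\sigma_{m,t})$ is strictly positive, which is the contradiction. The paper's Lemma~\ref{latesand}, by contrast, is unconditional---ruling out $\ell/m$-late points by maximizing the sum under the hypothesis that everything interior to $\tilde\Omega_\zeta$ is occupied. The final proof then chains them: Lemma~\ref{latesand} first kills late points, then Lemma~\ref{earlysand} converts any hypothetical early point into a late point, which contradicts Lemma~\ref{latesand}. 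Your ``two independent dual arguments'' phrasing would leave the early side unproved.

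Two smaller points. First, you propose a union bound over a polynomial grid of times $s\in[0,T]$, which is a leftover from the random IDLA argument; the divisible sandpile is deterministic, so the conclusion for each $s$ requires no union, and the paper makes none. Second, $\Omega_\zeta$ in the notation of \cite{darrow2020convergence} is a domain used to justify replacing $\nu_{m,t}$ by the stopped process $\nu_{\zeta,t}$ (via $\supp\nu_{m,t}\subset\Omega_\zeta$), not a test function; the test function is $H_\zeta$ in both directions. Finally, note that the paper's proof of the Thin Tentacles lemma for the sandpile (Lemma~\ref{tentaclessand}) is itself nontrivial: it introduces intermediate random processes $\nu^\ell_{m,t}$ converging in law to the deterministic $\nu_{m,t}$ and pushes the IDLA bound through that limit with uniform constants. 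You invoke the lemma as a black box, which is acceptable given it is stated separately, but it is worth knowing that this step is where the actual probabilistic work enters.
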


\begin{lemma}\label{earlysand}
	There are constants $C,\alpha>0$ dependent only on the flow such that, for large enough $m$, $s\in[0,T]$, $a\geq Cm^{2/5}$, and $\ell\leq\alpha a$, an $a/m$-early point in $N_m(t)$ by time $m^2s$ implies a different, $\ell/m$-late point by time $m^2s$.
\end{lemma}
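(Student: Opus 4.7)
The plan is to argue by contradiction, adapting the ``early implies late'' strategy of the companion paper \cite{darrow2020convergence} from the IDLA martingale to the deterministic harmonic identity of the stopped divisible sandpile. Suppose $\zeta$ is $a/m$-early at time $m^2s$; since $d(\zeta,D_s) > a/m$ and the flow is smooth, the time $\tau := \tau(\zeta)$ at which $\zeta \in \partial D_\tau$ satisfies $\tau - s \geq c_0\, a/m$ for some $c_0 > 0$ depending only on the flow. I will assume for contradiction that no $\ell/m$-late point exists, so that $(D_s)_{\ell/m} \cap \tfrac{1}{m}\mathbb{Z}^2 \subset N_m(m^2s)$.

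I then run the stopped sandpile $\nu_{\zeta,\cdot}$ through time $m^2s$. Since the two processes share source sequence and the stopped version only absorbs mass that would otherwise exit $D_\tau$, a standard coupling/abelian argument (combined with the obstacle-problem characterization of \cite{Levine_2008}) gives $\nu_{\zeta,m^2s}(x) \geq \nu_{m,m^2s}(x)$ for every $x \in D_\tau^\circ \cap \tfrac{1}{m}\mathbb{Z}^2$, and the excess is concentrated on $\partial D_\tau$ with total equal to the mass the regular process has sent beyond $D_\tau$. Applying Lemma \ref{tentaclessand} to the regular sandpile at $\zeta$ with radius $r \sim a/m$ produces at least $\Omega(a^2)$ points of $N_m(m^2s)$ in $B(\zeta,r)$, a positive fraction of them outside $D_\tau$; their mass is therefore deposited on $\partial D_\tau$ near $\zeta$ in the stopped process, according to the boundary harmonic measure.

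The heart of the proof is then to apply the stopped-sandpile identity
\[\sum_{x \in D_\tau \cap \frac{1}{m}\mathbb{Z}^2} H_\zeta(x)\bigl(\nu_{\zeta,m^2s}(x) - \sigma_{\zeta,m^2s}(x)\bigr) = 0,\]
with $H_\zeta$ the grid-harmonic, Poisson-kernel-type test function at $\zeta$ built in \cite{darrow2020convergence}. Because $H_\zeta$ is of order $m$ near its pole, the $\Omega(a^2)$ units of mass deposited on $\partial D_\tau$ close to $\zeta$ produce a positive ``signal'' term that, after the Poisson-kernel normalization, is of the critical order that overpowers the remaining contributions once $a \geq Cm^{2/5}$. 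The source term $\sum H_\zeta \sigma_{\zeta,m^2s}$ is uniformly bounded (its support lies in $D_0$, where $H_\zeta = O(1)$), and, under the contradiction hypothesis, the only possible cancellation in the interior of $D_\tau$ comes from the $O(m\ell)$ late-deficit concentrated in a strip of thickness $\ell/m$ along $\partial D_s$. Balancing these forces $\ell \gtrsim a$, contradicting $\ell \leq \alpha a$ once $\alpha$ is chosen small enough in terms of the flow.

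The main obstacle is the sharp Poisson-kernel estimate on $H_\zeta$ at the scale $a/m$ near $\partial D_\tau$: it is the precise balance between the pole strength of $H_\zeta$ and its Poisson-kernel decay that determines the threshold $a \geq Cm^{2/5}$. Fortunately this is exactly the estimate that underlies the analogous early-implies-late lemma for IDLA in \cite{darrow2020convergence}, and the argument transports essentially verbatim to the stopped sandpile because both settings rely on the same grid-harmonic identity; the only novelty is replacing the IDLA random-walk martingale with the deterministic identity recorded in the definition of $\nu_{\zeta,\cdot}$.
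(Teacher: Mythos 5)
The approach you sketch misplaces the pole of the auxiliary Poisson kernel, and this changes the argument in a way that breaks it. In the paper, the early point is called $z$, and $\zeta$ is a \emph{different} lattice point chosen in a thin annulus at distance roughly $V(4a+2C)/(mv)$ outside $D_{t/m^2}$---strictly farther out than $z$. This choice is what guarantees $d(N_m(t),D_\tau^c)\gtrsim a/m$ and hence (via Lemma~4.2(a) of \cite{darrow2020convergence}) that $\supp\nu_{m,t}\subset\Omega_\zeta$, so that the stopped and unstopped sandpiles literally coincide and the identity $\sum(\nu_{\zeta,t}-\sigma_{m,t})H_\zeta=0$ can be applied to the actual configuration. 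By putting the pole at the early point itself you place $\zeta\in\partial D_\tau\cap N_m(t)$, so $\supp\nu_{m,t}\not\subset\op{int}(D_\tau)$ and the two processes no longer agree; you are then forced to invent a comparison ``$\nu_{\zeta,m^2s}\geq\nu_{m,m^2s}$ on $D_\tau^\circ$, with the excess on $\partial D_\tau$,'' which is asserted but not established and is not a standard consequence of abelianness (absorption at $\partial D_\tau$ can redistribute mass in either direction relative to the unstopped process). The paper's choice of $\zeta$ is precisely designed to make such a comparison unnecessary.

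A second, related problem is how you try to extract the positive signal. You want the $\Omega(a^2)$ thin-tentacle points to be deposited \emph{on} $\partial D_\tau$ near the pole. But with $\zeta$ the early point and $N_m(t)\subset(D_{t/m^2})^{(a+1)/m}$, the ball $B(\zeta,a/m)$ is mostly filled on the $D_{t/m^2}$-side, i.e.\ the tentacle mass sits \emph{inside} $D_\tau$, not outside; ``a positive fraction outside $D_\tau$'' doesn't follow from Lemma~\ref{tentaclessand}. And if $H_\zeta$ is the discrete Poisson kernel of $D_\tau$ with pole at $\zeta$, then $H_\zeta$ vanishes on $\partial D_\tau\setminus\{\zeta\}$, so mass absorbed at the boundary away from $\zeta$ contributes nothing to the identity in any case. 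In the paper's version the signal comes instead from the $\sim ba^2$ occupied sites near $z$ lying \emph{inside} $D_\tau$ at depth $\sim a/m$ from $\partial D_\tau$, where $H_\zeta$ is of size $\sim m/a$, producing the lower bound $\sum(\nu_{\zeta,t}-\sigma_{m,t})H_\zeta\geq vba/12V$. Your version does not reproduce this estimate, and the balancing against the $O(m\ell)$ late deficit (which is what forces $\ell\lesssim a$) therefore isn't supported.
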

\begin{proof}
	Suppose $z\in N_m(t)\setminus N_m(t-1)$ is the first $a/m$-early point in $N_m$---that is, $z\notin (D_{t/m^2})^{a/m}$, but $N_m(t-1)\subset(D_{(t-1)/m^2})^{a/m}$. Further assume that there are no $\ell/m$-late points by the time $t$, or equivalently that $(D_{t/m^2})_{\ell/m}\subset N_m(t)$.
	
	Since $z$ is adjacent to $N_m(t-1)$, we know that \[N_m(t)\subset(D_{t/m^2})^{(a+1)/m}.\]
	Let $\zeta=\zeta(z,t)$ be the nearest point to $z$ in the annulus
	\[\frac{1}{m}\mathbb{Z}^2\cap(D_{t/m^2})^{V(4a+2C)/mv +2/m}\setminus(D_{t/m^2})^{V(4a+2C)/mv},\]
	where $C$ will be specified later, and $v,V>0$ are as in Lemma 3.5 of \cite{darrow2020convergence}. Let $\tau>0$ be such that $\zeta\in\partial D_\tau$, and note that
	\[d_H(D_{t/m^2},D_\tau)\geq d(D_{t/m^2},\zeta)\geq V(4a+2C)/mv.\]
	By Lemma 3.5 of \cite{darrow2020convergence}, this implies
	\begin{align*}
		d(N_m(t),D_\tau^c)&\geq d((D_{t/m^2})^{(a+1)/m},D_\tau^c)\\
		&\geq d(D_{t/m^2},D_\tau^c)-(a+1)/m\\
		&\geq \frac{v}{V}d_H(D_{t/m^2},D_\tau)-(a+1)/m\\
		&\geq (3a+2C-1)/m>1.
	\end{align*}
	From Lemma 4.2(a) of \cite{darrow2020convergence}, this means $\supp(\nu_{m,t})\subset \Omega_\zeta$, and thus we can replace $\nu_{m,t}$ with $\nu_{\zeta,t}$. As in \cite{darrow2020convergence}, we can show that---if $\#\left(N_m(t)\cap B(z,a/m)\right)>ba^2$ and no points are $\ell/m$-late by time $t$---then
	\[\sum_{z'\in\frac{1}{m}\mathbb{Z}^2}\left(\nu_{\zeta,t}(z')-\sigma_{m,t}(z')\right)H_\zeta(z')\geq vba/12V>0.\]
	Both of the listed assumptions are true; we know $\#\left(N_m(t)\cap B(z,a/m)\right)>ba^2$ by Lemma \ref{tentaclessand}, and we have assumed that no points are $\ell/m$-late by time $t$. However, $\sum\left(\nu_{\zeta,t}-\sigma_{m,t}\right)H=0$ for any $H$ harmonic on $\supp(\nu_{\zeta,t})$, so this is a contradiction.
\end{proof}

\begin{lemma}\label{latesand}
	There is a constant $C>0$ dependent only on the flow such that, for large enough $m$, $s\in[0,T]$, and $\ell\geq Cm^{2/5}$, there can be no $\ell/m$-late point by time $m^2s$. 
\end{lemma}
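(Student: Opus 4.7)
The plan is to argue by contradiction, paralleling Lemma \ref{earlysand} in the dual direction. Suppose some $\ell/m$-late point exists by time $m^2 s$ for $\ell \geq C m^{2/5}$, and let $t$ be the first time at which such a point $z$ appears. Applying the contrapositive of Lemma \ref{earlysand} with $a := \ell/\alpha$, then choosing $C$ large enough that $\ell/\alpha$ still exceeds the threshold of that lemma, forces the absence of $a/m$-early points at every time up through $t$. In particular, $N_m(t)$ is tightly confined to $(D_{t/m^2})^{a/m}$.

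I would then set up a harmonic accounting identity exactly as in the proof of Lemma \ref{earlysand}, but with the auxiliary point $\zeta$ placed on the opposite side of the expected boundary from $z$. Specifically, choose $\zeta$ as the nearest lattice point in a thin annulus at distance of order $a/m$ just outside $\partial D_{t/m^2}$, angularly adjacent to $z$, and let $\tau$ be such that $\zeta \in \partial D_\tau$. The confinement $N_m(t) \subset (D_{t/m^2})^{a/m}$ together with Lemma~3.5 and Lemma~4.2(a) of \cite{darrow2020convergence} then gives $\supp(\nu_{m,t}) \subset \Omega_\zeta$, so we may replace $\nu_{m,t}$ by the stopped sandpile $\nu_{\zeta,t}$ and apply
\begin{equation*}
\sum_{z' \in \frac{1}{m}\mathbb{Z}^2} H_\zeta(z')\,\bigl(\nu_{\zeta,t}(z') - \sigma_{m,t}(z')\bigr) = 0.
\end{equation*}
The source term on the right is controlled by $\int H_\zeta\,\sigma_{t/m^2}$, while on the left the fully-occupied contribution $\sum_{N_m(t)} H_\zeta$ is bounded above using the tight confinement. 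The deficit arising from $z$ (where $H_\zeta$ is of order $v/V$, by the estimates in \cite{darrow2020convergence}) then produces a shortfall of order $v\ell/V$, which exceeds the admissible slack and yields the contradiction once $C$ is sufficiently large.

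The main obstacle is that, unlike the IDLA case, the qualitative condition $\nu_{m,t}(z) < 1$ does not immediately translate into a quantitative mass deficit; the lateness at a single point could be arbitrarily small. The resolution is to exploit the fact that lateness propagates: if $z$ is $\ell/m$-late, then every lattice point within a ball of radius comparable to $\ell/m$ around $z$ lying inside $(D_{t/m^2})_{\ell/m}$ must likewise fail to be full, since mass can only reach $z$ by toppling through neighbors and the sandpile grows outward from $\supp \sigma_{m,t}$. This is the complement of the thin-tentacles phenomenon of Lemma \ref{tentaclessand}, and summing the resulting pointwise deficits over the ball converts the qualitative inequality into the macroscopic shortfall required to close the harmonic identity.
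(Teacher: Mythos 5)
Your proposal takes a genuinely different route from the paper, but it contains a gap that I do not believe can be filled with the tools developed here.

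The crux of your argument is the ``lateness propagates'' claim: that if $z$ is $\ell/m$-late, then every lattice point in a ball of radius comparable to $\ell/m$ around $z$ and lying inside $(D_{t/m^2})_{\ell/m}$ is also not full, which you invoke to convert a single pointwise deficit into a macroscopic one. This is presented as ``the complement of the thin-tentacles phenomenon of Lemma~\ref{tentaclessand},'' but Lemma~\ref{tentaclessand} gives a volume lower bound for the \emph{occupied} set near occupied points; it says nothing about the unoccupied region, and the dual statement you need is not a consequence of it. The heuristic you offer (mass reaches $z$ only through neighbors, the sandpile grows outward from the sources) only shows that the unoccupied region containing $z$ is connected to the exterior; it does not exclude a thin unoccupied tentacle reaching deep into $(D_{t/m^2})_{\ell/m}$. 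Excluding such a tentacle is, essentially, the regularity of the free boundary that this family of lemmas is trying to establish, so using it as an input is circular. Absent a separate ``no-thin-holes'' lemma, the step from $\nu_{m,t}(z)<1$ to a macroscopic shortfall in your harmonic identity does not go through.

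The paper's proof avoids this difficulty entirely by choosing $\zeta$ to be the late point itself rather than an auxiliary point outside the boundary. Since $\zeta\in(D_{t/m^2})_{\ell/m}$, the time $\tau$ with $\zeta\in\partial D_\tau$ satisfies $t-m^2\tau\gtrsim m\ell$ by Lemma~3.5 of \cite{darrow2020convergence}; that is, the flow passed through $\zeta$ long ago in sandpile time. One then runs the \emph{stopped} sandpile $\nu_{\zeta,t}$, which confines mass to $D_\tau$, and bounds $\tilde M_\zeta(t)=\sum_{z'}(\nu_{\zeta,t}-\sigma_{m,t})H_\zeta$ from above by the fully-occupied configuration. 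The macroscopic deficit $-c\ell$ arises from the large time lag $t-m^2\tau$ (too much source mass has been injected to fit the available room inside $D_\tau$), and the extra $+1$ compensating term comes precisely from $\nu_{\zeta,t}(\zeta)H_\zeta(\zeta)<1$. Since $\tilde M_\zeta(t)=0$ identically for the stopped sandpile, this gives the contradiction once $C$ is large. No propagation of lateness to a ball around $z$ is required, and no appeal to Lemma~\ref{earlysand} is made.

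A smaller point: your invocation of the contrapositive of Lemma~\ref{earlysand} to get confinement of $N_m(t)$ is logically admissible (Lemma~\ref{earlysand} is self-contained), but it is also unnecessary in the paper's route, since the stopped sandpile's domain $\Omega_\zeta$ sits inside $D_\tau$ with $\tau<t/m^2$, so the confinement needed there is automatic rather than derived from the absence of early points.
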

\begin{proof}
	Without loss of generality, let $a=\ell^2/Cm^{2/5}\geq\ell$. Fix an integer $t\leq m^2s$, and suppose that $\zeta\in\frac{1}{m}\mathbb{Z}^2\cap\left((D_{t/m^2})_{\ell/m}\setminus D_0\right)$ is $\ell/m$-late by time $t$. Then $d(\zeta,\partial D_{t/m^2})\geq\ell/m$, so by Lemma 3.5 of \cite{darrow2020convergence},
	\begin{align*}
		t-m^2\tau\geq 2m^2\sqrt{1+\tau}&\left(\sqrt{1+t/m^2}-\sqrt{1+\tau}\right)\\
		&\geq \frac{2m^2}{V}\sqrt{1+\tau}\cdot d_H(D_{t/m^2},D_\tau)\geq \frac{2m^2}{V}\sqrt{1+\tau}\cdot d(\zeta,\partial D_{t/m^2})\geq \frac{2m\ell}{V}.
	\end{align*}
	Since $\zeta$ is $\ell/m$-late at time $t$, we know that $\zeta\notin N_m(t)$, so $\nu_{\zeta,t}(\zeta)<1$. As in \cite{darrow2020convergence}, the sum
	\[\tilde{M}_\zeta(t):=\sum\nolimits_{z'}\left(\nu_{\zeta,t}(z')-\sigma_{m,t}(z')\right)H_\zeta(z')\]
	is maximized if the interior of $\tilde{\Omega}_\zeta\cap\frac{1}{m}\mathbb{Z}^2$ is fully occupied by $N_m(t)$. We can then show, exactly as in \cite{darrow2020convergence}, that
	\[\tilde{M}_\zeta(t)<1-c\ell\]
	for some $c>0$; the new constant term comes from the $\nu_{\zeta,t}(\zeta)H_\zeta(\zeta)<1$ contribution. So long as $C$ is large enough, this is still negative; of course, we know that $\tilde{M}_\zeta(t)=0$, so this is a contradiction.
\end{proof}

\begin{proof}[Proof of Theorem \ref{narrowsand}]
	We can work with the set $N_m(t)$ instead of $\supp\nu_{m,t}$; indeed, the latter only differs from the former within one unit of the boundary. By Lemma \ref{latesand}, we only need to show that no $O(m^{-3/5})$-early point can exist. Suppose a point is $\alpha^{-1}Cm^{-3/5}$-early at a time $t\leq m^2s$. By Lemma \ref{earlysand}, this implies that another point is $Cm^{-3/5}$-late by the same time; this contradicts Lemma \ref{latesand}, and we retrieve our result.
\end{proof}

\subsection*{Acknowledgments}
I would like to thank Professor David Jerison and Pu Yu (MIT Department of Mathematics) for their mentorship throughout this project, as well as Professor Scott Sheffield for his insights regarding the divisible sandpile model.

\bibliographystyle{alpha}
\bibliography{../Bibliography}

\end{document}